\newfont{\footsc}{cmcsc10 at 8truept}
\newfont{\footbf}{cmbx10 at 8truept}
\newfont{\footrm}{cmr10 at 10truept}
\renewcommand\paragraph{\@startsection{paragraph}{4}{\z@}
                                    {2ex \@plus.5ex \@minus.2ex}
                                    {-1em}
                                    {\normalfont\normalsize\bfseries}}
\renewcommand\subparagraph{\@startsection{subparagraph}{5}{\parindent}
                                       {2ex \@plus.5ex \@minus .2ex}
                                       {-1em}
                                      {\normalfont\normalsize\bfseries}}
\newlength{\BiblioSpacing}
\renewenvironment{thebibliography}[1]{
\begin{oldthebibliography}{#1}
\setlength{\parskip}{\BiblioSpacing}
\setlength{\itemsep}{\BiblioSpacing}
}
{
\end{oldthebibliography}
}
\def\abstractname{Abstract -}   
\def\abstract{\begin{adjustwidth}{1cm}{1cm} \par    \footnotesize \noindent {\bf \abstractname} 
\def\endabstract{ \end{adjustwidth} \smallskip }}
\newtheorem{theorem}{Theorem}[section]}
\newtheorem{proposition}[theorem]{Proposition}}
\newtheorem{lemma}[theorem]{Lemma}}
\newtheorem{corollary}[theorem]{Corollary}}
\title{\Large\bf A combinatorial approach to counting primitive periodic and primitive pseudo orbits on circulant graphs}
\author{\sc L. Engelthaler, I. Hellerman, and T. Hudgins
\thanks{This work was supported by a research grant from The Nancy Cain and Jeffrey A. Marcus Science Endowment in Honor of President Donald A. Cowan.}}
\begin{document}
\setcounter{page}{1}
\maketitle
\thispagestyle{fancy}

\vskip 1.5em

\begin{abstract}
For families of 4-regular directed circulant graphs with $n$ vertices, we count the number of primitive periodic orbits of length up to at least $n$.
The relevant counting techniques are then extended to count the number of primitive pseudo orbits (sets of distinct primitive periodic orbits) of length up to at least $n$ that lack self-intersections, or that self-intersect only at individual vertices repeated exactly twice (2-encounters of length zero),
for two particular families of 4-regular directed circulant graphs.
We then regard these two families of graphs as families of quantum graphs and use the counting results to compute the variance of the coefficients of the quantum graph's characteristic polynomial.
\end{abstract}

\begin{keywords}
circulant graph; quantum graph
\end{keywords}

\begin{MSC}
05; 81
\end{MSC}

\section{Introduction}

Quantum chaos is the field that studies the relationships between classically chaotic dynamical systems and quantum systems \cite{G92}.
Kottos and Smilansky proposed quantum graphs as a model of quantum chaos in \cite{KS99}, which involves considering a differential operator or a scattering problem on a graph that has classically chaotic properties like ergodicity or mixing.
One of the statistics related to the quantum spectrum of a quantum graph that Kottos and Smilansky deal with is the variance of the coefficients of a quantum graph's characteristic polynomial, $\langle |a_n|^2 \rangle$.  

Band, Harrison, and Joyner derived a formula for $\langle |a_n|^2 \rangle$ in \cite{BHJ12}, which relies on finite sums over pseudo orbits (collections of periodic orbits). 
(Their approach has some similarities to the formula for the spectral determinant of quantum graphs derived by Akkermans et al. in \cite{Aetal00}.)
The diagonal contribution to the sum for $\langle |a_n|^2 \rangle$ has been computed for families of directed $q$-nary graphs \cite{BHS19}.  
Most recently, the variance formula has been evaluated, in the case of 4-regular directed graphs with incoming and outgoing degree of two and a Discrete Fourier Transform vertex scattering matrix, to a sum that relies only on knowing the numbers of primitive pseudo orbits lacking self-intersections and those that self-intersect only at individual vertices repeated exactly twice (2-encounters of length zero) \cite{HH20, HH21}.  
This is the first spectral statistic in the quantum chaos literature that the authors are aware of that can be fully evaluated from a sum over periodic orbits, without making use of heuristic arguments to justify the neglect of certain orbits from the sum.  
Thus, it becomes useful to investigate counting pseudo orbits of the specified types.  

Circulant graphs have been studied extensively since at least the 1960's \cite{ET69}.  
Properties such as the number of spanning trees \cite{AYI10} or the diameter of a random circulant graph \cite{MS13} have been examined in much detail.
They have also had some spectral properties studied when regarded as quantum graphs \cite{HS19}.
Here we relay some general results regarding the counting of primitive periodic orbits and primitive pseudo orbits of various types on several families of 4-regular directed circulant graphs.
In particular, the main result in section \ref{sec:generalresult} counts the number of primitive periodic orbits of any positive length on a 4-regular directed circulant graph, with incoming and outgoing degree each two, that lacks loops and multiple edges.

The paper is organized in the following way.  Section \ref{sec:background} lays out the relevant terminology, notation, and lemmas for graphs, directed circulant graphs, and periodic and pseudo orbits. 
In section \ref{sec:PO}, some results on counting primitive periodic orbits are given for two specific families of directed circulant graphs, where there is always a directed edge from vertex $i$ to vertex $i+1$, and an additional outgoing edge at vertex $i$ determined by the family.
Then some similar proof techniques are used to prove the main result, which counts primitive periodic orbits of positive length for 4-regular directed circulant graphs, with incoming and outgoing degree each two, that lack loops and multiple edges.
Section \ref{sec:PsO} returns to the same two specific families of directed circulant graphs,  and proves some primitive pseudo orbit counting results, classified by self-intersections (lacking them or having only 2-encounters of length zero).
We regard these families of graphs as families of quantum graphs in section \ref{sec:quantum}, and use the counting results to evaluate the variance of the quantum graph's characteristic polynomial.  
The variance is also considered numerically and is shown to agree with our orbit counting results.
Lastly, some conclusions and future directions are outlined briefly in section \ref{sec:conclusions}.

\section{Graph Theory and Directed Circulant Graphs}
\label{sec:background}

A \textit{graph} $\mathcal{G}$ consists of a set of \textit{vertices} $\mathscr{V}=\{0,1,\dots,n-1\}$ and a set of \textit{edges} $\mathscr{E}$. 
An edge $e=\{v_0,v_1\}$ is a pair of elements from the vertex set $\mathscr{V}$. 
In the case where $v_0=v_1$, the edge is called a \textit{loop}. 
In this paper, we will ignore graphs with loops.

A \textit{bond} $b$ is a directed edge, that is, an ordered pair $(v_0,v_1)$ of elements from the vertex set $\mathscr{V}$; we call the graph $\mathcal{G}$'s set of bonds $\mathscr{B}$.
A graph with directed edges is a \textit{directed graph} (or \textit{digraph}). 
The terms edge and bond are often used interchangeably, but we use the term bond to refer specifically to directed edges. 
The vertex $v_0$ is called the \textit{origin vertex of} $b$ and the vertex $v_1$ is called the \textit{terminal vertex of} $b$. 
The functions $o,t: \mathscr{B}\rightarrow\mathscr{V}$ indicate that $o(b)=v_0$ and $t(b)=v_1$. 
The bond $b$ is \textit{incoming} at $v_1$ and \textit{outgoing} at $v_0$. 
Figure \ref{fig:graph1} shows a directed graph with 7 vertices. 
In this paper, we will ignore graphs with multiple distinct bonds outgoing at a vertex $v_0 \in \mathscr{V}$ and incoming at $v_1 \in \mathscr{V}$.

\begin{figure}[!htb]
\begin{center}
\begin{tikzpicture}
\draw[fill=black] (-0.87,1.8) circle (1.5pt);
\draw[fill=black] (0.87,1.8) circle (1.5pt);
\draw[fill=black] (1.95,0.45) circle (1.5pt);
\draw[fill=black] (1.56,-1.25) circle (1.5pt);
\draw[fill=black] (0,-2) circle (1.5pt);
\draw[fill=black] (-1.56,-1.25) circle (1.5pt);
\draw[fill=black] (-1.95,0.45) circle (1.5pt);
\node at (-1.07,2) {0};
\node at (1.07,2) {1};
\node at (2.17,0.47) {2};
\node at (1.77,-1.27) {3};
\node at (0,-2.25) {4};
\node at (-1.77,-1.27) {5};
\node at (-2.17,0.47) {6};
\draw[-latex] (-0.87,1.8) -- (0.75,1.8);
\draw[-latex] (0.87,1.8) -- (1.85,0.55);
\draw[-latex] (1.95,0.45) -- (1.66,-1.15);
\draw[-latex] (1.56,-1.25) -- (0.1,-1.9);
\draw[-latex] (0,-2) -- (-1.46,-1.35);
\draw[-latex] (-1.56,-1.25) -- (-1.95,0.5);
\draw[-latex] (-1.95,0.45) -- (-0.92,1.67);
\draw[-latex] (-0.87,1.8) -- (1.95,0.45);
\draw[-latex] (0.87,1.8) -- (1.56,-1.25);
\draw[-latex] (1.95,0.45) -- (0,-2);
\draw[-latex] (1.56,-1.25) -- (-1.56,-1.25);
\draw[-latex] (0,-2) -- (-1.95,0.45);
\draw[-latex] (-1.56,-1.25) -- (-0.87,1.8);
\draw[-latex] (-1.95,0.45) -- (0.87,1.8);
\end{tikzpicture}
\caption{\label{fig:graph1} The directed circulant graph $C_7^+(1,2)$}
\end{center}
\end{figure}
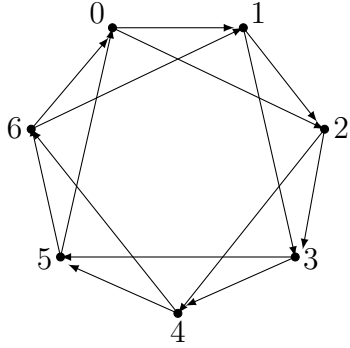

The \textit{incoming degree} $D_v^{in}$ of a vertex $v \in \mathscr{V}$ is the number of incoming bonds at $v$ and the \textit{outgoing degree} $D_v^{out}$ of $v$ is the number of outgoing bonds at $v$. 
The \textit{degree} $D_v$ of $v$ is the sum of the incoming and outgoing degrees of $v$. 
A graph is \textit{$k$-regular} if every vertex $v \in \mathscr{V}$ has degree $D_v=k$. 
In this paper, we are particularly concerned with 4-regular directed graphs such that $D_v^{in}=D_v^{out}=2$ for all $v \in \mathscr{V}$.

The \textit{adjacency matrix} of a graph $\mathcal{G}$ is a square matrix $A$ whose rows and columns are indexed by the vertices, $i,j=0,1,\dots,n-1$, such that $A_{i,j}$ is equal to the number of edges from vertex $i$ to vertex $j$. 
As we are ignoring graphs with loops and multiple bonds with the same direction between any pair of vertices, all entries of our adjacency matrices will have only zeroes and ones as entries, and will have only zeroes along the diagonal.
The adjacency matrix of an undirected graph is symmetric ($A_{i,j}=A_{j,i}$). 
The adjacency matrix of a directed graph need not be symmetric.

A square matrix is \textit{circulant} if $A_{i,j} = A_{i+1, j+1}$, where $n$ is the number of rows or columns in the matrix, and the indices are taken mod $n$.
In other words, $A$ is defined by a row vector, the first row, and the elements of each subsequent row are a cyclic shift one entry to the right.
The adjacency matrix of the directed graph in figure \ref{fig:graph1},
\begin{equation}
A = \kbordermatrix{
& 0 & 1 & 2 & 3 & 4 & 5 & 6 \\
0 & 0 & 1 & 1 & 0 & 0 & 0 & 0 \\
1 & 0 & 0 & 1 & 1 & 0 & 0 & 0 \\
2 & 0 & 0 & 0 & 1 & 1 & 0 & 0 \\
3 & 0 & 0 & 0 & 0 & 1 & 1 & 0 \\
4 & 0 & 0 & 0 & 0 & 0 & 1 & 1 \\
5 & 1 & 0 & 0 & 0 & 0 & 0 & 1 \\
6 & 1 & 1 & 0 & 0 & 0 & 0 & 0
} \ ,
\end{equation}
is circulant and not symmetric.
Note that if a bond $(v_0, v_0+a \mod n)$ is an outgoing bond at $v_0$, then any vertex $v \in \mathscr{V}$ has an outgoing bond $(v, v+a \mod n)$ for $a\in \mathbb{Z}$.

The \textit{arc} $a$ of a bond $(v_0,v_1)$ is defined as $v_1-v_0 \mod n$. 
The terminal vertex of any bond $(v_0,v_1)$ can then be written as $v_1\equiv v_0+a \mod n$. 
If a graph $\mathcal{G}$ has the same set of arcs of outgoing bonds $\mathscr{A}=\{a_1,a_2,\dots,a_{D_v^{out}}\}$ at any vertex $v$, then we call this a \textit{directed circulant graph}, denoted as $C^+_n(a_1,a_2,\dots,a_{D_v^{out}})$, where $n$ is the number of vertices on the graph.
The graph in figure \ref{fig:graph1} is, in fact, the directed circulant graph $C_7^+(1,2)$.
Note that directed circulant graphs have circulant adjacency matrices.
To avoid multiple distinct bonds with the same direction between vertex pairs, as well as loops, $D_v^{out}$ can be at most $n-1$.
Note that it is typical to limit the length of the largest arc, as multiple edges and loops are not usually allowed on circulant graphs; however, allowing an arc to be a multiple of $n$, or repeating values in the arc list, could in general be done.
Moreover, the definition of an \textit{undirected} circulant graph usually only allows the largest arc to be up to $n/2$, but we do allow for two bonds that run in opposite directions between a pair of vertices; if $a$ and $n-a$ are both in the set of arcs $\mathscr{A}$ on a directed circulant graph, then between every two vertices $v_0$ and $v_1$ such that $v_1 - v_0 \mod n \equiv a$, it is also true that $v_0 - v_1 \mod n \equiv n-a$, but the two corresponding bonds have different initial and terminal vertices.

A 4-regular directed circulant graph with $D_v^{in}=D_v^{out}=2$ for any vertex on the graph can thus be denoted $C_n^+(a_1,a_2)$. 
The only nonzero entries in row $i$ of $A$ are 
\begin{equation}
A_{i,i+a_1}=A_{i,i+a_2}=1 \ ,
\end{equation} 
where the indices are taken mod $n$.
If the smaller of the two possible arcs is $a_1$, we can also denote the graph $C_n^+(a_1,a_1+d)$. 
Note that $a_1 > 0$ and $d > 0$, since we are ignoring graphs with loops and with multiple distinct bonds outgoing at a vertex $v_0 \in \mathscr{V}$ and incoming at $v_1 \in \mathscr{V}$. 
Note that if the number of vertices is $n=a_1+d$, the graph would contain loops, and if $n<a_1+d$, there would be a simpler, equivalent way of describing the graph where $n\nless a_1+d$, so for this paper, $n > a_1+d$. 
On a graph $C_n^+(a_1,a_1+d)$, there is a bond connecting a vertex $i$ to vertex $j$ if $j-i \equiv a_1 \mod n$ or $j-i \equiv a_1+d \mod n$. 
Since these are directed graphs, the same is not necessarily true if $i-j \equiv a_1 \mod n$ or $i-j \equiv a_1+d \mod n$. 
This directionality is specified by the $+$ in the notation for a graph $C^+_n(a_1, a_1+d)$.

A vertex $v_0$ is \textit{adjacent} to a vertex $v_1$ if either $(v_0,v_1) \in \mathscr{B}$ or $(v_1,v_0) \in \mathscr{B}$. 
A \textit{walk} $w$ on a directed graph is an ordered set of bonds and vertices, which can be written $w = v_0, b_0, v_1, b_1, v_2, \dots, v_l$, such that $o(b_i)=v_i$ and $t(b_i)=v_{i+1}$ for $0 \leq i \leq l-1$. 
We call $v_0$ the \textit{origin vertex of} $w$ and $v_l$ the \textit{terminal vertex of} $w$. 
So long as there is a single bond between each pair of vertices, we may denote a walk by just the sequence of adjacent vertices; thus, the walk $w=v_0,v_1,\dots,v_l$. 
A graph is \textit{connected} if, for any two vertices $v_0$ and $v_1$ in the vertex set, there is a walk with origin vertex $v_0$ and terminal vertex $v_1$. 
The \textit{length} of a walk is the number of bonds in the walk, including repetitions. The walk $w$ has length $l$. 
A \textit{circuit} $x$ is a walk that starts and ends at the same vertex. 

A \textit{periodic orbit} $\gamma$ is an equivalence class of closed walks that are all rotations of each other. 
For example, $x_1=v_0,v_1,v_2,v_0$ and $x_2=v_1,v_2,v_0,v_1$, where $\{v_0,v_1,v_2\} \subset \mathscr{V}$, belong to the same periodic orbit. 
The \textit{length} of a periodic orbit $\gamma$ is the number of bonds in $\gamma$, i.e., the number of bonds in any circuit that belongs to $\gamma$. 
Note that the bonds are not counted separately for each circuit in $\gamma$, but only once for each time they appear in a given circuit.

A \textit{pseudo orbit} $\bar{\gamma}$ is a collection of periodic orbits on a graph. 
For instance, the pseudo orbit $\bar{\gamma}=\{\gamma_1,\gamma_2\}$ consists of the periodic orbits $\gamma_1=v_0,v_1,v_2,v_0$ and $\gamma_2=v_1,v_3,v_1$, where $\{v_0,v_1,v_2,v_3\} \subset \mathscr{V}$. 
The \textit{length} of a pseudo orbit is the sum of the lengths of the individual periodic orbits in $\bar{\gamma}$. 
Note that the bonds are counted separately for each periodic orbit they appear in. 
The \textit{null pseudo orbit} $\bar{0}$ is composed of zero periodic orbits, contains no bonds, and is said to have length $l=0$.

A circuit is \textit{primitive} if it does not consist of a shorter circuit repeated multiple times. 
Likewise, a periodic orbit is primitive if it does not consist of a shorter periodic orbit repeated multiple times. 
Note that the number of unique circuits that belong to a primitive periodic orbit is equal to the length of the periodic orbit. 
A pseudo orbit is primitive if it consists only of primitive periodic orbits, none of which are repeated in the collection.

A \textit{self-intersection} or \textit{$\ell$-encounter} is a maximal subsequence of vertices $v_0,v_1,\dots,v_h$ (and, if there is more than one vertex, the bonds between them) in a pseudo orbit $\bar{\gamma}$ that is repeated $\ell$ times within the pseudo orbit. 
The repetition could be due to the same subsequence being used once in each of multiple periodic orbits in $\bar{\gamma}$ or due to the same subsequence being used multiple times within a single periodic orbit in $\bar{\gamma}$, or both. 
Its \textit{encounter length} $h$ is the number of bonds in the sequence. 
If the length is zero, then $h=0$ and the encounter consists of a single vertex repeated $\ell$ times. 
For this paper, we are interested specifically in primitive pseudo orbits with no self-intersection and with only $2$-encounters of length zero.

The \textit{walk sum} $\alpha$ of a walk $w$ is defined as the sum of the arcs of the bonds in $w$. 
The walk sum of a periodic orbit is equal to that of any circuit in the periodic orbit, and the walk sum of a pseudo orbit is the sum of the walk sums of the periodic orbits the pseudo orbit is composed of. 
Note that since the terminal vertex of each bond is the sum of the bond's origin vertex and its arc, the terminal vertex of a walk of length $l$ will be equivalent to the origin vertex plus the walk sum, mod $n$; that is, $v_l\equiv v_0+\alpha \mod n$.

\begin{lemma}
\label{thm:circuitwalksum}
On a directed graph $\mathcal{G}$ with $n$ vertices, the walk sum of any circuit, periodic orbit, or pseudo orbit is equal to $mn$, where $m \in \mathbb{N}$.
\end{lemma}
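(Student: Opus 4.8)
The plan is to reduce the general statement to the case of a single circuit and then to invoke the observation made just above the lemma, namely that the terminal vertex of a walk of length $l$ satisfies $v_l \equiv v_0 + \alpha \mod n$, where $\alpha$ is the walk sum. Since a periodic orbit is an equivalence class of circuits all sharing a common walk sum, and a pseudo orbit's walk sum is by definition the sum of the walk sums of its constituent periodic orbits, once the claim is established for circuits it propagates to periodic orbits immediately and to pseudo orbits by summing the resulting multiples of $n$.

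First I would treat a circuit $x = v_0, b_0, v_1, \dots, v_l$, which by definition is closed, so $v_0 = v_l$. Applying $v_l \equiv v_0 + \alpha \mod n$ and using that $v_0$ and $v_l$ denote the same vertex gives $v_0 \equiv v_0 + \alpha \mod n$, hence $\alpha \equiv 0 \mod n$; that is, $\alpha = mn$ for some integer $m$. Next I would verify that $m$ is a natural number. Each arc is defined as $v_{i+1} - v_i \mod n$ and, because loops are excluded, is a residue strictly between $0$ and $n$, hence a positive integer. The walk sum $\alpha$ is a sum of such arcs and is therefore a nonnegative integer, strictly positive whenever the circuit contains at least one bond. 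Combined with $\alpha = mn$ and $n > 0$, this forces $m \geq 0$, and in fact $m \geq 1$ for any nonempty circuit.

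Finally, for a periodic orbit $\gamma$ the walk sum equals that of any circuit it contains, so it is $mn$ by the circuit case; and for a pseudo orbit $\bar\gamma = \{\gamma_1, \dots, \gamma_k\}$ the walk sum is $\sum_{i=1}^k \alpha_i = \sum_{i=1}^k m_i n = \left(\sum_{i=1}^k m_i\right) n$, again a natural-number multiple of $n$. The congruence does essentially all of the work, so I expect no serious obstacle; the only point requiring care is the integrality and sign claim of the second step. One must be explicit that arcs are taken as positive representatives mod $n$ rather than as signed differences $v_{i+1}-v_i$, since otherwise $\alpha$ could be negative and the argument would only yield $m \in \mathbb{Z}$ rather than $m \in \mathbb{N}$. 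Beyond that bookkeeping, the result is a direct consequence of the displayed relation.
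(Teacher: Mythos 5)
Your proposal is correct and follows essentially the same route as the paper: the congruence $v_0 \equiv v_0 + \alpha \bmod n$ for a closed walk, followed by the immediate extension to periodic orbits (same walk sum as any of their circuits) and to pseudo orbits (sum of multiples of $n$). Your added care about arcs being positive representatives mod $n$, which guarantees $m \in \mathbb{N}$ rather than merely $m \in \mathbb{Z}$, is a worthwhile refinement that the paper's proof leaves implicit.
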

\begin{proof}
Let $w$ be a walk of length $l$ on a directed graph $\mathcal{G}$. 
In order for $w$ to be a circuit, the terminal vertex must be the same as the origin vertex $v$. 
Let the arcs of the bonds in $w$ be $a_1,a_2\dots,a_l$. 
The walk sum of $w$ is thus $\alpha = a_1+a_2+\cdots+a_l$. 
We must have $v+\alpha \equiv v \mod n$, and thus $\alpha$ is a multiple of $n$. 
Thus, the walk sum of any circuit is a multiple of $n$. 
Since the walk sum of a periodic orbit is the walk sum of a circuit, and the walk sum of a pseudo orbit is the sum of walk sums of periodic orbits, these must also be multiples of $n$.
\end{proof}
\vspace{1em}

We say that a vertex $v$ is \textit{passed} by a circuit $x$ each time that a bond $(v_0,v_1)$ appears in $x$ such that $v-v_0 \mod n < v_1-v_0 \mod n$ (equivalently, the arc of $(v_0,v_1)$ is greater than $v-v_0 \mod n$, the arc required to have a bond $(v_0, v_1)$). 
If $v_0=v$, then $v$ appears in the circuit at this point, and $(v_0,v_1)$ is the outgoing bond from $v$. 
When $v$ is passed by a bond $b$, we will also sometimes say that $b$ passes $v$.
A vertex $v$ is passed by a periodic orbit each time any given circuit in the periodic orbit passes $v$. 
A vertex $v$ is passed by a pseudo orbit $\bar{\gamma}$ each time a periodic orbit in $\bar{\gamma}$ passes $v$.

\begin{lemma}
\label{thm:circuitpassing}
On a directed graph $\mathcal{G}$ with no loops, any circuit, and thus any periodic orbit, with a walk sum equal to $mn$ (where $m \in \mathbb{N}$) will pass a vertex of $\mathcal{G}$ exactly $m$ times. 
Likewise, any pseudo orbit on $\mathcal{G}$ with a walk sum equal to $mn$ will pass a vertex of $\mathcal{G}$ exactly $m$ times. 
\end{lemma}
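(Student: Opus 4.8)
The plan is to reduce everything to the case of a single circuit and then to prove that case by ``unrolling'' the circuit onto the integers, i.e.\ by lifting the walk from the cycle of vertices $\mathbb{Z}/n\mathbb{Z}$ to its universal cover $\mathbb{Z}$. The periodic orbit and pseudo orbit statements follow almost immediately: a periodic orbit passes $v$ as many times as any circuit belonging to it does, and a pseudo orbit passes $v$ the sum of the passings of its component periodic orbits. By Lemma~\ref{thm:circuitwalksum} each component periodic orbit has walk sum $m_j n$ with $\sum_j m_j = m$, so once each is shown to pass $v$ exactly $m_j$ times, the pseudo orbit passes $v$ exactly $\sum_j m_j = m$ times. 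Hence only the circuit case requires real work.

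First I would record the elementary local fact behind ``passing'': a bond with origin $v_i$ and arc $a_i = v_{i+1} - v_i \bmod n$ satisfies $1 \le a_i \le n-1$ (no loops), and it passes $v$ precisely when $v - v_i \bmod n < a_i$. Summing over a single bond, the number of vertices it passes is exactly $a_i$, so the total number of (vertex, passing) incidences of a circuit is its walk sum $\alpha$. If one only wanted the average, this already gives $\alpha/n = m$ passings per vertex; the real content of the lemma is that the passings are distributed perfectly evenly, $m$ at every vertex, with no fluctuation.

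To obtain the even distribution I would lift the circuit $x = v_0, b_0, v_1, \dots, v_l = v_0$ to $\mathbb{Z}$. Set $\tilde{v}_0 = v_0 \in \{0,\dots,n-1\}$ and, using that every arc $a_i$ is positive, define $\tilde{v}_{i+1} = \tilde{v}_i + a_i$, so that $\tilde{v}_i \equiv v_i \bmod n$ and the lifted sequence is strictly increasing. The circuit closes up, so $\tilde{v}_l = \tilde{v}_0 + \alpha = v_0 + mn$. Each bond $b_i$ now corresponds to the half-open integer interval $[\tilde{v}_i, \tilde{v}_{i+1})$ of length $a_i$, and because $a_i < n$ this interval meets each residue class mod $n$ at most once; it contains an integer congruent to $v$ exactly when $v - v_i \bmod n < a_i$, which is precisely the passing condition for $b_i$. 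Since the consecutive intervals $[\tilde{v}_i, \tilde{v}_{i+1})$ partition the segment $[v_0, v_0 + mn)$, the number of times $x$ passes $v$ equals the number of integers in $[v_0, v_0 + mn)$ congruent to $v$ mod $n$.

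Finally I would compute that count: a half-open integer interval whose length $mn$ is an exact multiple of $n$ contains exactly $m$ representatives of each residue class mod $n$, independently of where the interval begins. Thus every vertex $v$ is passed exactly $m$ times by $x$, which settles the circuit case and hence the lemma. The main obstacle is the bookkeeping that the lifted bond-intervals tile $[v_0, v_0+mn)$ without gaps or overlaps (monotonicity from positivity of the arcs) and that each has length strictly less than $n$ (no loops); these are exactly the two facts that upgrade the easy ``average is $m$'' observation into the exact ``every vertex, $m$ times'' conclusion.
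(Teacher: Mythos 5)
Your proof is correct and is essentially the same argument as the paper's: your lifted vertices $\tilde{v}_i$ are exactly the paper's partial sums $s_i = v_0 + a_0 + \cdots + a_{i-1}$, and your observation that the half-open intervals $[\tilde{v}_i,\tilde{v}_{i+1})$ tile $[v_0, v_0+mn)$ and each catch a residue class $v$ precisely when the corresponding bond passes $v$ is the paper's bijection between passings and integers $s' \equiv v \pmod n$ with $s_0 \le s' < s_l$. The reduction of the pseudo orbit case to the circuit case via Lemma \ref{thm:circuitwalksum} also matches the paper (and you state the $\sum_j m_j = m$ step slightly more explicitly).
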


\begin{proof}
Let $x$ be a circuit of length $l$ on a directed graph $\mathcal{G}$ with a walk sum of $mn$, and let $v$ be a vertex of $\mathcal{G}$. 
Label the vertices of $x$ by $v_0,v_1,\dots, v_l$ and the arcs of the bonds in $x$ by $a_0,a_1,\dots, a_{l-1}$. 
Because $\mathcal{G}$ is a circulant graph with no loops, the arc $a$ of any bond will satisfy $0<a<n$. 
Consider the sum $s_l=v_0+a_0+\cdots+a_{l-1}$, which is equivalent to $v_l \mod n$, and all the partial sums $s_i=v_0+a_0+\cdots+a_{i-1}$, which are equivalent to $v_i \mod n$ for each $0\leq i<l$. 
The lowest sum will be $s_0=v_0$; the highest will be $s_l=v_0+mn$ by Lemma \ref{thm:circuitwalksum}; and all the others will be between, with $s_{i+1}>s_i$ for all $i$, since $a_i>0$ for all $i$. 
Consider any bond in $x$, $(v_i,v_{i+1})$, with arc $a_i$. 
In order for $(v_i,v_{i+1})$ to pass $v$, we must have that $a_i$ is greater than $v-v_i \mod n$; call this number $a'$. 
Then $v \equiv v_i+a' \mod n \equiv v_0+a_0+\cdots+a_{i-1}+a' \mod n$; call this sum $s'$. 
Since $0\leq a'<a_i$, there is a number $s' \equiv v \mod n$ such that $s_i\leq s'<s_{i+1}$. 
Conversely, for any number $s' \equiv v \mod n$ such that $s_0\leq s'<s_l$, there must be exactly one $i$ such that $s_i\leq s'<s_{i+1}$; then $a_i > s'-s_i \equiv v-v_i \mod n$, and so the bond $(v_i,v_{i+1})$, which has arc $a_i$, passes $v$. 
Thus, $x$ passes $v$ exactly once for each number $s' \equiv v \mod n$ such that $s_0\leq s'<s_l$, and since $s_l-s_0=mn$, there are exactly $m$ of these numbers. 
Therefore $x$ passes $v$ exactly $m$ times.

Since the number of times a pseudo orbit passes a vertex is simply the sum of the times it occurs in the periodic orbits that make up the pseudo orbit, we can also say that a pseudo orbit passes $v$ $m$ times if its walk sum is $mn$.
\end{proof}
\vspace{1em}

If $a_1=1$, then a circuit $x$ passes a vertex $v$ each time a bond of arc $d+1$ with origin vertex $v_0\in\{v-d,v-d+1,\dots,v-1\}$ appears in $x$ as well as each time $v$ appears in $x$.

In the following lemma, we will use the functions $o$ and $t$ on a set of bonds, rather than an individual bond: if $\mathcal{B}=\{b_1,b_2,\dots,b_l\}$, then $o(\mathcal{B})$ is defined as the multiset (meaning a set that allows repetition, but is still not ordered) of vertices $\{o(b_1),o(b_2),\dots,o(b_l)\}$, and $t(\mathcal{B})$ is defined analogously.

\begin{lemma}
\label{thm:bondstoPsO}
On a directed graph $\mathcal{G}$, for any nonempty set $\mathcal{B}$ of distinct bonds of $\mathcal{G}$ such that $o(\mathcal{B})=t(\mathcal{B})$ and $o(\mathcal{B})$ has no vertex repeated more than twice, the number of distinct pseudo orbits containing exactly the bonds of $\mathcal{B}$ is $2^N$, where $N$ is the number of repeated vertices in $o(\mathcal{B})$.
\end{lemma}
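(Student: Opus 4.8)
The plan is to translate the statement into a purely combinatorial fact about the directed subgraph $H$ of $\mathcal{G}$ whose bond set is $\mathcal{B}$, and then to count by describing how closed walks are glued together at each vertex. The hypothesis $o(\mathcal{B})=t(\mathcal{B})$ says precisely that every vertex of $H$ has equal incoming and outgoing degree, and the hypothesis that no vertex is repeated more than twice in $o(\mathcal{B})$ says that these common degrees are at most $2$. Thus $N$ is exactly the number of vertices of $H$ whose incoming (equivalently, outgoing) degree equals $2$, while every other vertex touched by $\mathcal{B}$ has incoming and outgoing degree $1$.

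First I would introduce the central object, which I will call a \emph{pairing} of $H$: a choice, at each vertex $v$, of a bijection from the bonds of $\mathcal{B}$ incoming at $v$ to the bonds of $\mathcal{B}$ outgoing at $v$. Because each bond in a pseudo orbit has a uniquely determined bond following it within the periodic orbit it belongs to, a pseudo orbit containing exactly the bonds of $\mathcal{B}$ induces such a pairing: send each incoming bond $b$ at $v$ to the bond immediately following $b$ in its periodic orbit (whose origin is $t(b)=v$). Conversely, a pairing determines a permutation of $\mathcal{B}$, where the successor of $b$ is its image at the vertex $t(b)$; the cycles of this permutation are closed walks that use each bond once, each such cycle is a periodic orbit, the cycles are bond-disjoint, and together they form a pseudo orbit containing exactly $\mathcal{B}$.

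The core step is to verify that these two assignments are mutually inverse, yielding a bijection between pairings and the pseudo orbits in question. The map from pseudo orbits to pairings is well defined because, in a cyclic sequence of bonds, each bond has a unique successor and a unique predecessor; the successor assignment at a vertex is injective (hence a genuine bijection, the two degrees being equal) precisely because predecessors are unique. That the two constructions invert one another is then immediate, since both the pseudo orbit and the pairing are encoded by the very same successor permutation of $\mathcal{B}$. I would also remark that, because all bonds of $\mathcal{B}$ are distinct, each periodic orbit produced uses each of its bonds exactly once and so is automatically primitive, while distinct periodic orbits have disjoint bond sets; the resulting pseudo orbits are therefore in fact primitive, although the lemma only claims the count for pseudo orbits.

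Finally, counting pairings is routine: at a vertex of degree $1$ the bijection is forced, contributing a factor of $1$, whereas at each of the $N$ vertices of degree $2$ there are exactly two bijections between the two incoming and the two outgoing bonds, contributing a factor of $2$. Multiplying over all vertices gives $2^{N}$ pairings, hence $2^{N}$ pseudo orbits. The main obstacle I anticipate is not the count itself but the bijection: one must argue carefully that the successor permutation is recoverable from the unordered collection of periodic orbits, and that two pairings differing at even a single degree-$2$ vertex genuinely produce different pseudo orbits (equivalently, that the successor permutation faithfully records the pairing). This again reduces to the observation that the predecessor of each bond within its periodic orbit is unique, so the local pairing data is visible in the pseudo orbit.
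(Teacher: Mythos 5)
Your proposal is correct, but it takes a genuinely different route from the paper's. The paper argues procedurally: it traces out periodic orbits one bond at a time, starting from an arbitrary bond of $\mathcal{B}$, and observes that a binary choice arises exactly at the first arrival at each of the $N$ repeated vertices (either a choice between two unused outgoing bonds, or, when the repeated vertex is the vertex the construction started from, a choice between closing the circuit and continuing), all other steps being forced; the count $2^N$ is then read off from these dynamically appearing choices. You instead set up a static, global bijection between the pseudo orbits in question and your \emph{pairings} (vertex-local bijections from incoming to outgoing bonds of $\mathcal{B}$, i.e.\ transition systems on the subgraph spanned by $\mathcal{B}$), and count pairings as a product of local factors: two at each of the $N$ degree-two vertices and one elsewhere. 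Your route buys rigor at precisely the points where the paper's argument is informal: that every pseudo orbit using exactly the bonds of $\mathcal{B}$ arises from the construction (surjectivity) and that distinct sequences of choices yield distinct pseudo orbits (injectivity) are both consequences of your key observation that the successor permutation is recoverable from the unordered collection of rotation classes, since each bond occurs exactly once and hence has a unique predecessor within the pseudo orbit. What the paper's version buys in exchange is an explicit algorithmic construction that mirrors how the lemma is invoked later in Propositions \ref{thm:numPsOnoselfintsecondfamily} and \ref{thm:numPsOsecondfamily2enc} (assembling orbits from a concretely selected bond set), and an explicit treatment of the close-the-circuit case, which in your framework is silently and correctly absorbed into the choice of local bijection at the starting vertex. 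Both arguments exploit the hypotheses identically: $o(\mathcal{B})=t(\mathcal{B})$ gives equal in- and out-degrees at every vertex, and the multiplicity bound of two caps each local factor at two.
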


\begin{proof}
Let $\mathcal{G}$ be a directed graph, and let $\mathcal{B}$ be a nonempty set of distinct bonds of $\mathcal{G}$ such that $o(\mathcal{B})=t(\mathcal{B})$, no vertex of $o(\mathcal{B})$ is repeated more than twice, and there are exactly $N$ vertices repeated twice. 
Let $\bar{\gamma}$ be a pseudo orbit containing exactly the bonds of $\mathcal{B}$; then $o(\mathcal{B})$ is the set of vertices used in $\bar{\gamma}$. 
Let $b_0=(v_0,v_1)$ be any bond in $\mathcal{B}$, and let $\gamma_0$ be the periodic orbit in $\bar{\gamma}$ containing $(v_0,v_1)$. 
There will be at least one outgoing bond from $v_1$, since $v_1 \in o(\mathcal{B}) = t(\mathcal{B})$. 
If $v_1$ is not a repeated vertex, then there is only one bond which can follow $(v_0,v_1)$ in $\gamma_0$; if $v_1$ is repeated, then there are two outgoing bonds that may be chosen. 
In either case, the terminal vertex of this bond will then have either one or two outgoing bonds that can be chosen next, and the same will be true of each vertex until a vertex appears for a second time in $\gamma_0$; call this vertex $v$. 
If $v \neq v_0$, then $v$ must be repeated in $t(\mathcal{B})$ and thus in $o(\mathcal{B})$, so there is a second outgoing bond from $v$ that must be used at the repetition of $v$. 
If $v=v_0$, then $v_0$ may or may not be repeated in $o(\mathcal{B})$. 
If $v_0$ is not repeated, then the circuit corresponding to $\gamma_0$ must be completed; otherwise, there is a choice of whether to close the circuit, or to use the second outgoing bond and close the circuit the second time that a bond has terminal vertex $v_0$. 
In any case, this process will eventually complete a circuit, thus determining the corresponding periodic orbit $\gamma_0$. 
If $\gamma_0$ uses every bond of $\mathcal{B}$, then it forms a single-element pseudo orbit; otherwise, if $b_1$ is any bond in $\mathcal{B}$ that did not appear in $\gamma_0$, another periodic orbit $\gamma_1$ may be constructed by the same process. 
(The only difference is that a vertex that is repeated in $o(\mathcal{B})$, but was already used once in $\gamma_0$, will not offer a choice of outgoing bonds after it occurs in $\gamma_1$.) 
This process may be repeated until all the bonds in $\mathcal{B}$ are used, and this produces a pseudo orbit $\bar{\gamma}=\{\gamma_0,\gamma_1,\dots \}$. 
This process will always produce a pseudo orbit; there are exactly two subsequent choices one can make the first time that a bond is selected that has each of the $N$ repeated vertices as its terminal vertex (this may mean two choices of outgoing bonds, or if the vertex was the starting vertex of the construction, then the choice is to complete the circuit or use the remaining outgoing bond), and the choice of outgoing bond at any other vertex is already determined; so the number of possible pseudo orbits is $2^N$.
\end{proof}
\vspace{1em}

If $v$ is a vertex, assume a bond $(v\pm c_1,v\pm c_2)$ is interpreted as $(v\pm c_1 \mod n, v\pm c_2 \mod n)$ and a pair of vertices $v\pm c_1,v\pm c_2$ can be likewise interpreted as $v\pm c_1 \mod n, v\pm c_2 \mod n$, unless otherwise specified.

\section{Counting Primitive Periodic Orbits}
\label{sec:PO}

Primitive periodic orbits on a graph can be counted using the trace of the powers of the graph's adjacency matrix $A$ \cite{HH20}. 
However, the equation used in this method,
\begin{equation}
    \textrm{Tr}(A^l)=\sum_{\omega|l}\omega \cdot PO(\omega) = \underbrace{\sum_{\omega|l,\ \omega\neq l}\omega \cdot PO(\omega)}_\text{non-primitive circuits} + \underbrace{l \cdot PO(l)}_\text{primitive circuits} \ ,
\end{equation}
is recursive: in order to find the number of primitive periodic orbits of a given length $l$, one must first count the number of primitive periodic orbits for any length that is a divisor of $l$. 
We find a closed formula, that is not dependent on a graph's adjacency matrix, that counts the number of primitive periodic orbits on families of directed circulant graphs $C_n^+(a_1,a_1+d)$. 

\subsection{First Family of Graphs \texorpdfstring{$C_n^+(1,2)$}{}}

We first look at the family of graphs $C_n^+(1,2)$ whose adjacency matrices have first row $\begin{bmatrix}
0 & 1 & 1 & 0 & 0 & \dots & 0
\end{bmatrix}$.

\begin{proposition}
\label{thm:numPOfirstfamily}
For a graph $C_n^+(1,2)$, where $n>2$, the number of primitive periodic orbits of length $0<l\leq n$ is given by
\begin{equation}
    \text{PO}(n,l) = \begin{cases}
    \frac{n}{l} \binom{l}{n-l} & \text{if } l<n\\
    2 & \text{if } l=n, \text{ and } n \text{ is odd} \\
    1 & \text{if } l=n, \text{ and } n \text{ is even}
    \end{cases} \ . 
\end{equation}
\end{proposition}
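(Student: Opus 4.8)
The plan is to reduce each periodic orbit to the cyclic sequence of its arcs, every one of which is $1$ or $2$, and to count by first counting circuits. First I would observe that a periodic orbit $\gamma$ of length $l$ on $C_n^+(1,2)$ is an equivalence class of closed walks whose bonds all have arc $1$ or $2$, so by Lemma \ref{thm:circuitwalksum} its walk sum is $mn$ for some $m\in\mathbb{N}$. If $\gamma$ uses $j$ bonds of arc $2$ (hence $l-j$ of arc $1$), its walk sum equals $l+j$, which lies in the interval $[l,2l]$. Since $l<n$ forces $2l<2n$, the only positive multiple of $n$ this interval can contain is $n$ itself, and it does so precisely when $l\ge n/2$. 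Hence a periodic orbit of length $l<n$ must have walk sum $n$, with exactly $j=n-l$ bonds of arc $2$ and $2l-n$ of arc $1$; when $l<n/2$ none exists, matching $\binom{l}{n-l}=0$.

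Next I would settle primitivity and then count. Any orbit of walk sum exactly $n$ is automatically primitive: were it a shorter orbit repeated $k\ge 2$ times, that shorter orbit would itself have walk sum a positive multiple of $n$ by Lemma \ref{thm:circuitwalksum}, forcing the total to be at least $2n$, a contradiction. I would then count circuits of length $l$ and walk sum $n$: such a circuit is specified bijectively by its origin vertex ($n$ choices) together with the positions of the $n-l$ arc-$2$ steps among the $l$ steps ($\binom{l}{n-l}$ choices), and every such choice closes up because the total displacement is $n\equiv 0 \pmod n$. This gives exactly $n\binom{l}{n-l}$ circuits. Since a primitive periodic orbit of length $l$ contains exactly $l$ distinct circuits (its $l$ rotations), and all orbits here are primitive, dividing yields $\mathrm{PO}(n,l)=\frac{n}{l}\binom{l}{n-l}$. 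As a geometric check, by Lemma \ref{thm:circuitpassing} such an orbit passes every vertex once, so its $n-l$ skipped vertices form a non-adjacent set on the $n$-cycle, and the classical count of size-$(n-l)$ non-adjacent subsets of $C_n$ is again $\frac{n}{l}\binom{l}{n-l}$.

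Finally I would treat $l=n$ separately, where the walk sum may be $n$ or $2n$. The all arc-$1$ orbit (walk sum $n$) is the single Hamiltonian cycle $0,1,\dots,n-1,0$, always primitive, contributing the term $\frac{n}{n}\binom{n}{0}=1$. The all arc-$2$ orbit (walk sum $2n$) is generated by repeatedly adding $2$; it is a single primitive $n$-cycle exactly when $\gcd(2,n)=1$, i.e. $n$ odd, in which case it is an additional orbit, whereas for $n$ even it merely traverses the $(n/2)$-cycle on one parity class twice and is therefore non-primitive of length $n$, contributing nothing. This yields $2$ for $n$ odd and $1$ for $n$ even.

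I expect the main obstacle to be the bookkeeping that pins the circuit-to-orbit ratio to exactly $l$, which rests entirely on the primitivity argument above, together with the parity subtlety at $l=n$, where the winding-twice arc-$2$ orbit must be counted only in the odd case. Note that the integrality of $\frac{n}{l}\binom{l}{n-l}$ then comes for free from the circuit count, since $n\binom{l}{n-l}$ is partitioned into orbits of $l$ circuits each.
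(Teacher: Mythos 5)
Your proposal is correct and follows essentially the same route as the paper's proof: classify circuits by walk sum ($n$ versus $2n$), count circuits of walk sum $n$ by choosing the positions of the $n-l$ arc-$2$ bonds, rule out nonprimitive circuits of walk sum $n$ via Lemma \ref{thm:circuitwalksum}, divide the $n\binom{l}{n-l}$ circuits by $l$, and settle $l=n$ by the parity of the all-arc-$2$ orbit. The only additions beyond the paper's argument are cosmetic: the explicit remark that no orbits exist for $l<n/2$ and the independent-set-on-a-cycle cross-check, neither of which changes the substance.
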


\begin{proof}
Let $\mathcal{G}=C_n^+(1,2)$, where $n>2$. 
Any vertex of $\mathcal{G}$ has two outgoing bonds, one of arc 1 and one of arc 2.

Let $x$ be a circuit on $\mathcal{G}$ with length $0 < l \leq n$. 
Since the arc of each bond of $\mathcal{G}$ is at least 1 and at most 2, the walk sum $\alpha$ of $x$ is $0<\alpha\leq 2n$. 
Thus, by Lemma \ref{thm:circuitwalksum}, there are two cases: $\alpha=n$ or $\alpha=2n$.

Consider the case where $\alpha=n$. 
Let $B$ be the number of bonds of arc 2 that appear in $x$. 
Since all the bonds of $\mathcal{G}$ have arc 1 or 2, $\alpha=2B+1(l-B)=B+l$, so $B+l=n$ and $B=n-l$. 
Thus, exactly $n-l$ bonds of arc 2 appear in $x$. Since any vertex of $\mathcal{G}$ has an outgoing bond of arc 2, these can be any of the $l$ bonds in the circuit. 
Thus, the number of different circuits with origin vertex $v$ and walk sum $\alpha=n$ on $\mathcal{G}$ is $\binom{l}{n-l}$. 
Since there are $n$ possible origin vertices, the total number of circuits with walk sum $\alpha=n$ on $\mathcal{G}$ is $n \binom{l}{n-l}$. 

\begin{lemma}
\label{thm:numprimPOfirstfamily}
On a directed circulant graph, there exists no nonprimitive circuit and thus no nonprimitive periodic orbit with walk sum $n$.
\end{lemma}
\begin{proof}
Suppose, for the sake of contradiction, that a circuit $y$ on $\mathcal{G}$ is nonprimitive. 
Then $y$ is equal to some circuit $y'$ repeated at least twice. 
But since the walk sum of $y$ is $n$, the walk sum of $y'$ could then be no greater than $n/2$, and so cannot be a multiple of $n$, which contradicts Lemma \ref{thm:circuitwalksum}. 
Thus, there exists no nonprimitive circuit with $\alpha=n$ on $\mathcal{G}$.
\end{proof}
\vspace{1em}

Since each primitive periodic orbit of length $l$ consists of $l$ circuits, and there is no circuit that is nonprimitive, the total number of primitive periodic orbits with $\alpha=n$ is 
\begin{equation}
\label{eq:binom1stfam}
\frac{n}{l} \binom{l} {n-l} \ .
\end{equation}
We will see in the next case that walk sum $\alpha=2n$ is impossible for circuits of length $0<l<n$, so \eqref{eq:binom1stfam} is also the total number of primitive periodic orbits of length $0<l<n$.

Consider the case where $\alpha=2n$. 
This is only possible if $l=n$, and every bond that appears in $x$ has arc 2. 
If $n$ is even, then this produces the walk $v,v+2,\dots,n-2,0,\dots,v,\dots,n-2,0,\dots,v$ if $v$ is even, and $v,v+2,\dots,n-1,1,\dots,v,\dots,n-1,1,\dots,v$ if $v$ is odd. 
These are each one shorter circuit repeated twice, and thus nonprimitive. 
If $n$ is odd, then this produces the walk $v,v+2,\dots,n-1,1,\dots,v-1,v+1,\dots,n-2,0,\dots,v$ if $v$ is even, and $v, v+2, \dots, n-2, 0, \dots, v-1,v+1,\dots,n-1,1,\dots,v$ if $v$ is odd. 
In either case, $v$ does not appear in the middle, so it is a primitive circuit. 
Since there are $n$ possible origin vertices, there are $n$ primitive circuits, and since they are of length $n$, they form $n/n=1$ periodic orbit. 
Thus, the second case produces one primitive periodic orbit if $n$ is odd, and none if $n$ is even.

So, when $l=n$, the number of primitive periodic orbits is \eqref{eq:binom1stfam} evaluated when $l=n$ (which simplifies to one) plus the number of primitive periodic orbits produced by the $\alpha=2n$ case.
Thus, the result follows.
\end{proof}

\subsection{Second Family of Graphs \texorpdfstring{$C_n^+(1,3)$}{}}

We now consider a second family of graphs $C_n^+(1,3)$, whose adjacency matrices have first row $\begin{bmatrix}
0 & 1 & 0 & 1 & 0 & 0 & \dots & 0
\end{bmatrix}$.

\begin{proposition}
\label{thm:numPOsecondfamily}
For a graph $C_n^+(1,3)$, where $n>3$, the number of primitive periodic orbits of length $0<l\leq n$ is given by
\begin{equation}
    \text{PO}(n,l) = \begin{cases}
    \frac{n}{l}\left[\binom{l}{(n-l)/2}+ \binom{l} {(2n-l)/2}-\binom{l/2}{(2n-l)/4}\right] & \text{if } l<n\\
    \binom{l}{l/2}-\binom{l/2}{l/4} + 1 & \text{if } l=n \text{ and } 3|n\\
    \binom{l}{l/2}-\binom{l/2}{l/4} + 2 & \text{if } l=n \text{ and } 3 \not| \text{ n}\\
    \end{cases} \ .
\end{equation}
\end{proposition}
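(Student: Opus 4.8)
The plan is to mirror the proof of Proposition \ref{thm:numPOfirstfamily}, now with outgoing arcs $1$ and $3$, organizing the count according to the walk sum. Let $\mathcal{G}=C_n^+(1,3)$ and let $x$ be a circuit of length $0<l\le n$. If $B$ denotes the number of arc-$3$ bonds in $x$, then its walk sum is $\alpha=3B+(l-B)=l+2B$, so $l\le\alpha\le 3l$; since $l\le n$, Lemma \ref{thm:circuitwalksum} leaves only $\alpha\in\{n,2n,3n\}$, and $\alpha=3n$ forces $l=n$ with every bond of arc $3$. For a fixed origin the walk sum determines $B=(\alpha-l)/2$, and a circuit is specified by choosing which of its $l$ positions carry an arc-$3$ bond; hence there are $\binom{l}{B}$ circuits of that walk sum from each origin, with the binomial read as $0$ whenever $B$ is not an integer in $\{0,1,\dots,l\}$ (this is exactly where the parity and range constraints enter). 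By vertex-transitivity of the circulant graph the count is the same at every origin, giving $n\binom{l}{(n-l)/2}$ circuits with $\alpha=n$ and $n\binom{l}{(2n-l)/2}$ with $\alpha=2n$.

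First I would settle primitivity for each walk sum. For $\alpha=n$, Lemma \ref{thm:numprimPOfirstfamily} applies verbatim: no nonprimitive circuit has walk sum $n$, so every such circuit is primitive. For $\alpha=2n$, a nonprimitive circuit is a shorter circuit repeated $k\ge 2$ times, and each repetition then has walk sum $2n/k$, which must be a multiple of $n$ by Lemma \ref{thm:circuitwalksum}; hence $k\mid 2$ and $k=2$. Repeating a length-$(l/2)$, walk-sum-$n$ circuit twice is a bijection onto the nonprimitive walk-sum-$2n$ circuits of length $l$, and such shorter circuits are automatically primitive by Lemma \ref{thm:numprimPOfirstfamily}; counting them gives exactly $\binom{l/2}{(2n-l)/4}$ per origin, leaving $\binom{l}{(2n-l)/2}-\binom{l/2}{(2n-l)/4}$ primitive ones. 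Since each primitive periodic orbit of length $l$ contains exactly $l$ circuits, dividing the total primitive-circuit count by $l$ yields, for $l<n$,
\begin{equation*}
\frac{n}{l}\left[\binom{l}{(n-l)/2}+\binom{l}{(2n-l)/2}-\binom{l/2}{(2n-l)/4}\right],
\end{equation*}
which is the claimed value (the case $\alpha=3n$ cannot occur when $l<n$).

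For $l=n$ the same two contributions specialize to the all-arc-$1$ orbit, counted by $\binom{n}{0}=1$, together with $\binom{n}{n/2}-\binom{n/2}{n/4}$ from $\alpha=2n$; it remains to handle $\alpha=3n$, the single all-arc-$3$ circuit $v,v+3,v+6,\dots$ from each origin. Its first return to the origin occurs after $n/\gcd(3,n)$ steps, so the length-$n$ circuit is primitive precisely when $\gcd(3,n)=1$, i.e.\ when $3\nmid n$; in that case the $n$ rotations form a single orbit (the cycle visits every vertex), contributing $+1$, whereas when $3\mid n$ the circuit is a length-$(n/3)$ circuit traversed three times and contributes nothing of length $n$. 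Adding the all-arc-$1$ contribution then gives $\binom{n}{n/2}-\binom{n/2}{n/4}+2$ when $3\nmid n$ and $\binom{n}{n/2}-\binom{n/2}{n/4}+1$ when $3\mid n$, as stated. I expect the main obstacle to be the walk-sum-$2n$ primitivity bookkeeping — verifying that the only repetition factor is $k=2$ and that the double-cover map is a bijection, so that the subtracted term is exactly $\binom{l/2}{(2n-l)/4}$ — together with the $\gcd$-based case split for the all-arc-$3$ cycle at $l=n$.
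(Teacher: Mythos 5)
Your proof is correct and follows essentially the same approach as the paper: classify circuits by walk sum $\alpha\in\{n,2n,3n\}$, count each class per origin with the binomial coefficient in the number of arc-$3$ bonds, remove nonprimitive circuits with $\alpha=2n$ via the repetition-factor-$2$ bijection, and divide by $l$. The only cosmetic difference is that you dispatch the all-arc-$3$ case at $l=n$ with a $\gcd(3,n)$ first-return argument, where the paper writes out the corresponding circuits explicitly (its Lemma \ref{thm:numprimPOsecondfamily} and the $3\mid n$ counting argument), but the underlying reasoning is the same.
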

\begin{proof}
Let $\mathcal{G}=C_n^+(1,3)$, where $n>3$. Any vertex of $\mathcal{G}$ has two outgoing bonds, one of arc 1 and one of arc 3.

Let $x$ be a circuit of length $0< l \leq n$ on $\mathcal{G}$. 
By Lemma \ref{thm:circuitwalksum}, the walk sum $\alpha$ of $x$ is a multiple of $n$.
Since the arc of each bond of $\mathcal{G}$ is at least 1 and at most 3, the walk sum $\alpha$ of $x$ is $0<\alpha\leq 3n$. 
Thus, there are three cases: $\alpha=n$, $\alpha=2n$, or $\alpha=3n$.

Let $B$ be the number of bonds in $x$ of arc 3. Then $\alpha=3B+1(l-B)=2B+l$. 

Consider the case where $\alpha=n$. 
Then, $2B+l=n$ and $B=(n-l)/2$. 
Thus, exactly $(n-l)/2$ bonds of arc 3 appear in $x$. 
(If $(n-l)/2$ is not an integer, then it is not possible to have a circuit with walk sum $n$, since this would represent a noninteger number of bonds.) 
Since any vertex of $\mathcal{G}$ has an outgoing bond of arc 3, these can be any of the $l$ bonds in the circuit. 
Thus, this case produces 
\begin{equation}
\label{eq:walksumn}
    \binom{l} {\frac{n-l}{2}}
\end{equation} 
different circuits with origin vertex $v$. 
When $(n-l)/2$ is a noninteger, we interpret this combination to be zero. 
We use the same interpretation for the other combinations. 
By Lemma \ref{thm:numprimPOfirstfamily}, there exists no nonprimitive circuit in this case.

Consider the case where $\alpha=2n$. 
Then $2B+l=2n$ and $B=(2n-l)/2$. 
Thus, exactly $(2n-l)/2$ bonds of arc 3 appear in $x$. 
Since these can be any of the $l$ bonds in the circuit, this case produces 
\begin{equation}
\binom{l}{\frac{2n-l}{2}}
\end{equation}
different circuits with origin vertex $v$.

Let $y$ be a nonprimitive circuit of length $l$ and walk sum $2n$ on $\mathcal{G}$. 
Then $y$ must consist of a shorter circuit $y'$ repeated at least twice. 
Thus this shorter circuit $y'$ must have a walk sum $\alpha_{y'}=n$ or $\alpha_{y'}=2n$. 
But $\alpha_y=r\alpha_{y'}$, where $r$ is the number of times $y'$ is repeated, which is at least twice, and $\alpha_y=2n$. 
The only way this is possible is if $\alpha_{y'}=n$, with $y'$ being repeated twice. 

The circuit $y'$ then has length $l/2$, which means the number of bonds of arc 3 must be 
\begin{equation}
    \frac{n-(l/2)}{2}=\frac{2n-l}{4} \ .
\end{equation} 
Since these can be any of the $l/2$ bonds in $y'$, the number of possible circuits of this length with origin vertex $v$ is 
\begin{equation}
\label{eq:walksum2nnonprim}
    \binom{\frac{l}{2}}{\frac{2n-l}{4}} \ .
\end{equation} 
Each of these produces exactly one nonprimitive circuit of length $l$, in which $y'$ is repeated twice; thus, the number of nonprimitive circuits on $\mathcal{G}$ of length $l$ with origin vertex $v$ is \eqref{eq:walksum2nnonprim}. 
In the case where $l/2$ is a noninteger, we interpret the combination to be  zero, since a circuit must have an integer length. 
We use the same interpretation for the other combinations.

Consider the case where $\alpha=3n$. 
This is only possible if $l=n$. 
Every bond that appears in $x$ must have arc 3, which means there is only one circuit from any origin vertex, for a total of $n$ circuits.

There are two cases: $3|l$, or $3\not| l$. 
If $3|l$, then for length $l/3=n/3$, \eqref{eq:walksumn} gives $\binom{n/3}{n/3} = 1$
primitive circuit with origin vertex $v$ and walk sum $n$. 
Since there are $n$ possible origin vertices and each primitive periodic orbit of length $n/3$ consists of $n/3$ circuits, the total number of primitive periodic orbits of length $n/3$ is $n/(n/3)$; that is, three primitive periodic orbits of length $n/3$ with walk sum $n$. 
Since they are primitive, they each contain $n/3$ circuits, for a total of $3(n/3)=n$ primitive circuits of length $n/3$ and walk sum $n$. 
Each of these $n$ circuits can be repeated three times to produce a nonprimitive circuit of length $n$ and walk sum $3n$. 
Since there are $n$ circuits total of length $n$ and walk sum $3n$, they must all be nonprimitive, and so no primitive orbits are added in this case.

Now consider the second case.
\begin{lemma}
\label{thm:numprimPOsecondfamily}
On a directed circulant graph $C_n^+(1,3)$ such that $3 \not| n$, there is one primitive periodic orbit of length $l=n$ with walk sum $3n$.
\end{lemma}
\begin{proof}
Since $l=n$ is not divisible by 3, there is one primitive circuit starting at 0 with a walk sum of $3n$, which takes the form $0,3,\dots,n-2,1,4,\dots,n-1,2,5,\dots,n-3,0$ if $n-2$ is divisible by 3, or $0,3,\dots,n-1,2,5,\dots,n-2,1,4,\dots,n-3,0$ if $n-1$ is divisible by 3. 
As the primitive circuit in either case contains $n$ unique vertices, there is a single primitive periodic orbit with walk sum $3n$.
\end{proof}
\vspace{1em}

Consider the case where $l<n$. 
The number of primitive circuits on $\mathcal{G}$ of length $l$ with origin vertex $v$ is the total number of circuits with walk sums $n$ and $2n$ minus the number of nonprimitive circuits with walk sums $n$ and $2n$. 
Since there are $n$ possible origin vertices, and each primitive periodic orbit of length $l$ consists of $l$ circuits, the total number of primitive periodic orbits on $\mathcal{G}$ is
\begin{equation}
\label{eq:totalPOfirstfamily}
    \frac{n}{l}\left[\binom{l}{\frac{n-l}{2}}+\binom{l}{\frac{2n-l}{2}}- \binom{\frac{l}{2}}{\frac{2n-l}{4}}\right] \ .
\end{equation}

Consider the case where $l=n$. 
With the addition of the possible walk sum of $3n$, the number of primitive periodic orbits is
\begin{align}
\frac{n}{l}\left[\binom{l}{\frac{n-l}{2}}+\binom{l}{\frac{2n-l}{2}}-\binom{\frac{l}{2}}{\frac{2n-l}{4}}\right] & = 1\left[\binom{l}{0}+\binom{l}{\frac{l}{2}}-\binom{\frac{l}{2}}{\frac{l}{4}}\right]\\
    & = \binom{l}{\frac{l}{2}}-\binom{\frac{l}{2}}{\frac{l}{4}} + 1 \ ,
\end{align} 
if $3|n$, and 
\begin{align}
 \frac{n}{l}\left[\binom{l}{\frac{n-l}{2}}+\binom{l} {\frac{2n-l}{2}}-\binom{\frac{l}{2}}{\frac{2n-l}{4}}\right]+1 & = 1\left[\binom{l}{0}+\binom{l}{\frac{l}{2}}-\binom{\frac{l}{2}}{\frac{l}{4}}\right]+1\\
    & = \binom{l}{\frac{l}{2}}-\binom{\frac{l}{2}}{\frac{l}{4}} + 2 \ ,
\end{align}
if $3\not|n$. 
Thus the result follows.
\end{proof}

\subsection{General Families of Graphs \texorpdfstring{$C_n^+(a_1,a_1+d)$}{}}
\label{sec:generalresult}

We now state the main counting result for primitive periodic orbits on a 4-regular directed circulant graph $C_n^+(a_1,a_1+d)$, where $a_1, d \in \mathbb{N}$.
The result has been verified numerically for several families of graphs and up to periodic orbit length 15; see appendix A.

In the following theorem, we will use the M\"obius function
\begin{equation}
    \mu(\omega)=\begin{cases} 1 & \text{if } \omega=1\\
    0 & \text{if } \omega \text{ is divisible by the square of a prime number}\\
    (-1)^k & \text{otherwise, } k \text{ is the number of prime factors of } \omega
    \end{cases} \ .
\end{equation}
See \cite{LP98} for more details.

\begin{theorem}
\label{thm:generalPO}
For a graph $C_n^+(a_1,a_1+d)$, where $n>a_1+d$, the number of primitive periodic orbits of length $l>0$ is given by  
\begin{equation}
    PO(n,l,a_1,d)= \frac{n}{l} \sum_{m=\lceil\frac{a_1l}{n}\rceil}^{\lfloor\frac{(d+a_1)l}{n}\rfloor} \sum_{\omega|m \text{, } \omega|l} \mu(\omega) \binom{\frac{l}{\omega}}{\frac{mn-a_1l}{\omega d}} \ .
\end{equation}
\end{theorem}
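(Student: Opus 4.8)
The plan is to follow the same template as Propositions \ref{thm:numPOfirstfamily} and \ref{thm:numPOsecondfamily}, but to replace the ad hoc subtraction of nonprimitive circuits with a clean M\"obius inversion over the divisor lattice. First I would fix an origin vertex $v$ and count \emph{all} circuits (primitive or not) of length $l$ with a fixed walk sum. Each of the $l$ bonds has arc either $a_1$ or $a_1+d$; if $B$ of them have arc $a_1+d$, the walk sum is $a_1 l + dB$. By Lemma \ref{thm:circuitwalksum} this must equal $mn$ for some integer $m$, forcing $B=(mn-a_1l)/d$, and since each arc lies between $a_1$ and $a_1+d$ we get $0\le B\le l$, equivalently $\lceil a_1 l/n\rceil \le m\le \lfloor (a_1+d)l/n\rfloor$, which is exactly the summation range in the statement. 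The choice of which $B$ of the $l$ positions carry arc $a_1+d$ is free, so the number of circuits with origin $v$ and walk sum $mn$ is $\binom{l}{(mn-a_1l)/d}$, with the convention (as in the proof of Proposition \ref{thm:numPOsecondfamily}) that the binomial is zero when the lower entry is not an integer in $[0,l]$.

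The heart of the argument is the passage from all circuits to primitive ones. Write $P(v,l,m)$ for the number of primitive circuits of length $l$, origin $v$, and walk sum $mn$, and $C(v,l,m)=\binom{l}{(mn-a_1l)/d}$ for the total just computed. Every circuit has a unique primitive root: if a circuit of length $l$ and walk sum $mn$ is a primitive circuit $y$ repeated $\omega$ times, then $\omega\mid l$, and since the walk sum of $y$ is itself a multiple of $n$ by Lemma \ref{thm:circuitwalksum}, say $(m/\omega)n$, we must also have $\omega\mid m$. Because repetition preserves the origin vertex, this decomposition gives
\begin{equation}
C(v,l,m)=\sum_{\omega\mid l,\ \omega\mid m} P\!\left(v,\tfrac{l}{\omega},\tfrac{m}{\omega}\right).
\end{equation}
The double condition is precisely $\omega\mid\gcd(l,m)$, so writing $l=\gcd(l,m)\,l_0$ and $m=\gcd(l,m)\,m_0$ turns this into an ordinary divisor sum indexed by $\gcd(l,m)$; M\"obius inversion then yields
\begin{equation}
P(v,l,m)=\sum_{\omega\mid l,\ \omega\mid m}\mu(\omega)\,C\!\left(v,\tfrac{l}{\omega},\tfrac{m}{\omega}\right)=\sum_{\omega\mid l,\ \omega\mid m}\mu(\omega)\binom{l/\omega}{\tfrac{mn-a_1l}{\omega d}}.
\end{equation}

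To finish, I would invoke the vertex-transitivity of $C_n^+(a_1,a_1+d)$: the shift $v\mapsto v+1 \bmod n$ is a graph automorphism carrying circuits to circuits while preserving length, walk sum, and primitivity, so $P(v,l,m)$ does not depend on $v$, and the total number of primitive circuits of length $l$ is $n\sum_m P(v,l,m)$ with $m$ ranging over the admissible values above. Finally, as observed after the definition of primitivity, each primitive periodic orbit of length $l$ contains exactly $l$ distinct circuits (its $l$ rotations), so dividing by $l$ converts the circuit count into the orbit count and produces the stated closed form for $PO(n,l,a_1,d)$.

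The step I expect to demand the most care is the displayed divisor-sum identity for $C(v,l,m)$, namely that decomposing a circuit as a power of its primitive root controls \emph{both} the length index $l$ and the walk-sum index $m$ simultaneously, so that the relevant poset is $\gcd(l,m)$ rather than $l$ alone; once this is pinned down, the M\"obius inversion and the division by $l$ are routine. I would also confirm that the zero convention for non-integer binomial entries is compatible with the inversion, by checking that non-admissible values of $m$ contribute genuinely empty sums, and I would reproduce the $C_n^+(1,2)$ and $C_n^+(1,3)$ cases from Propositions \ref{thm:numPOfirstfamily} and \ref{thm:numPOsecondfamily} as a consistency test.
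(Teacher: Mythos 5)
Your proposal is correct and follows essentially the same route as the paper: fix an origin vertex, count all circuits of each admissible walk sum $mn$ by a binomial coefficient, sieve out the nonprimitive circuits with the M\"obius function, then multiply by the $n$ choices of origin vertex and divide by the $l$ circuits per primitive periodic orbit. The only difference is presentational: you package the primitivity step as a divisor-sum identity over $\gcd(l,m)$ followed by standard M\"obius inversion, whereas the paper proves the same cancellation by hand, pairing square-free divisors of the maximal repetition number $q$ to show that each nonprimitive circuit contributes zero net to the signed sum.
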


\begin{proof}
Let $\mathcal{G}=C_n^+(a_1,a_1+d)$, where $n>a_1+d$. 
Each vertex of $\mathcal{G}$ has exactly two outgoing bonds, one of arc $a_1$ and one of arc $a_1+d$.

Let $x$ be a circuit on $\mathcal{G}$ with length $l$ and walk sum $\alpha$. 
By Lemma \ref{thm:circuitwalksum}, $\alpha=mn$ for some $m\in\mathbb{N}$. 
Let $B$ be the number of bonds of arc $a_1+d$ that appear in $x$. 
Thus, $mn=B(a_1+d)+a_1(l-B)$ and $B=(mn-a_1l)/d$. 
Since any vertex has an outgoing bond of arc $a_1+d$, these can be any of the $l$ bonds in $x$. 
Thus, the number of different circuits on $\mathcal{G}$ with origin vertex $v$, length $l$, and walk sum $mn$ is
\begin{equation}
    \binom{l}{\frac{mn-a_1l}{d}} \ .
\end{equation}

Let $\omega$ be a common divisor of $m$ and $l$. 
The number of circuits with origin vertex $v$, length $l/\omega$, and walk sum $mn/\omega$ is 
\begin{equation}
\label{eq:combination}
    \binom{\frac{l}{\omega}}{\frac{mn-a_1l}{\omega d}} \ .
\end{equation}

Consider the sum
\begin{equation}
\label{eq:gensum}
    \sum_{\omega|m \text{, } \omega|l} \mu(\omega) \binom{\frac{l}{\omega}}{\frac{mn-a_1l}{\omega d}} \ .
\end{equation}

Since $\mu(1)=1$, when $\omega=1$, 
\begin{equation}
    \mu(\omega) \binom{\frac{l}{\omega}}{\frac{mn-a_1l}{\omega d}}= \binom{l}{\frac{mn-a_1l}{d}} \ ,
\end{equation}
which is equal to the total number of circuits with origin vertex $v$, length $l$, and walk sum $mn$.

When $\omega>1$, the combination \eqref{eq:combination} represents the number of circuits of length $l$ that consist of a shorter circuit repeated $\omega$ times.
 
If $x$ is a primitive circuit, it will only add one term to the sum, where $\omega=1$. 
Otherwise, let $q$ be the largest possible number such that the nonprimitive circuit $x$ consists of a circuit of length $l/q$ repeated $q$ times. 
The circuit $x$ is also formed of a circuit of length $l/\omega$ repeated $\omega$ times, for each divisor $\omega$ of $q$. 
Now $q$ must be a divisor of $m$, since the shorter circuit has walk sum $mn/q$, which by Lemma \ref{thm:circuitwalksum}, must be a multiple of $n$. 
Since $q$ is a divisor of $m$, all of its unique prime factors are also factors of $m$; call these $p_1,\dots ,p_j$. 
For each divisor $\omega$ of $q$, there are two cases: $\omega$ is divisible by the square of one of the prime factors, or it is the product of a subset of these factors. 
In the first case, $\mu(\omega)=0$ and so nothing is added to the sum.

Consider the second case. 
The product of any subset of $\{p_1,\dots ,p_j\}$ is a divisor of $q$ and is not divisible by the square of a prime number, so $\mu(\omega)\neq 0$. 
Therefore, the circuit will add exactly one term to the sum for every subset $X \subset \{p_1,\dots,p_j\}$, with $\omega=\prod_{g\in X}g$.

For each subset $Y$ such that $p_1 \in Y$, there will be a corresponding subset $Z$ such that $Z= Y - \{p_1\}$. 
Let $P_Y=\prod_{g \in Y}g$ and $P_Z=\prod_{g \in Z}g$. 
$P_Y$ and $P_Z$ are the possible values of $\omega$ corresponding to the subsets $Y$ and $Z$. 
If $Z$ has $h$ elements, $Y$ has $h+1$ elements. 
Thus, if $h>0$, $\mu(P_Y)=(-1)\mu(P_Z)$; this also holds if $h=0$ since $\mu(P_Y)=\mu(p_1)=-1$ and $\mu(P_Z)=\mu(1)=1$. 
(Here, we use the convention that the product over the empty set is equal to 1.) 
In any case, the circuit is added to the sum once and subtracted from the sum once, so these two terms cancel. 
All of the subsets of $\{p_1,\dots ,p_j\}$ form pairs $Y,Z$, so $x$ is not counted. 
Thus, \eqref{eq:gensum} counts the number of primitive circuits with origin vertex $v$, length $l$, and walk sum $mn$.

By Lemma \ref{thm:circuitwalksum}, every primitive circuit has a walk sum $mn$ for some $m \in \mathbb{N}$, where $(d+a_1)l/n\geq m \geq a_1l/n$, since the greatest possible walk sum is $(d+a_1)l$ and the least possible walk sum is $a_1l$. 
Thus, the total number of primitive circuits on $\mathcal{G}$ with origin vertex $v$ and length $l$ is
\begin{equation}
    \sum_{m=\lceil\frac{a_1l}{n}\rceil}^{\lfloor\frac{(d+a_1)l}{n}\rfloor}\sum_{\omega|m \text{, } \omega|l} \mu(\omega) \binom{\frac{l}{\omega}}{\frac{mn-a_1l}{\omega d}} \ .
\end{equation}

Since there are $n$ possible origin vertices, and there are $l$ circuits in each periodic orbit of length $l$, the result holds.
\end{proof}

\section{Counting Primitive Pseudo Orbits}
\label{sec:PsO}

We now return to our first two families, $C_n^+(1,2)$ and $C_n^+(1,3)$, to investigate their primitive pseudo orbits. 
Through a series of propositions, we count the number of primitive pseudo orbits with no self-intersection, as well as those with any given number of 2-encounters of length zero, for these two families.
The results are verified numerically up to periodic orbit length $n$ for several graphs in the second family; see appendix B.

\subsection{First Family of Graphs \texorpdfstring{$C_n^+(1,2)$}{}}

For the first family, we count the total number of primitive pseudo orbits for $0<l\leq n$ and show that they all have no self-intersection.

\begin{proposition}
\label{thm:numPsOfirstfamily}
For a graph $C_n^+(1,2)$, where $n>2$, the number of primitive pseudo orbits of length $0<l\leq n$ is given by
\begin{equation}
    \text{PsO}(n,l) = \begin{cases}
    \frac{n}{l} \binom{l} {n-l} & \text{if } l<n\\
    2 & \text{if } l=n\\
    \end{cases} \ .
\end{equation}
\end{proposition}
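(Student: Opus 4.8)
The plan is to reduce the pseudo orbit count to the periodic orbit count already established in Proposition \ref{thm:numPOfirstfamily}, by showing that short pseudo orbits cannot contain more than one periodic orbit. The key quantitative fact is a lower bound on the length of any nonempty primitive periodic orbit: by Lemma \ref{thm:circuitwalksum} its walk sum is a positive multiple of $n$, hence at least $n$, and since every arc on $C_n^+(1,2)$ is at most $2$, a periodic orbit of walk sum $\alpha$ has length at least $\alpha/2$. Thus every nonempty periodic orbit has length at least $n/2$.

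First I would dispose of the case $0<l<n$. Since a pseudo orbit is a collection of periodic orbits and its length is the sum of their lengths, any pseudo orbit with two or more (necessarily nonempty) periodic orbits has length at least $2\cdot(n/2)=n$. Therefore a primitive pseudo orbit of length $l<n$ must consist of exactly one primitive periodic orbit, and sending such a pseudo orbit to its single constituent is a bijection onto the primitive periodic orbits of length $l$. Hence $\text{PsO}(n,l)=\text{PO}(n,l)=\frac{n}{l}\binom{l}{n-l}$ throughout this range, directly from Proposition \ref{thm:numPOfirstfamily}.

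The substantive case is $l=n$, where I would partition the primitive pseudo orbits of length $n$ by their number of constituent periodic orbits. A single periodic orbit of length $n$ is counted by $\text{PO}(n,n)$, which equals $2$ for odd $n$ and $1$ for even $n$. For two or more orbits, the length bound forces exactly two orbits, each of length precisely $n/2$, which already requires $n$ even. An orbit of length $n/2$ must have walk sum exactly $n$ (its walk sum lies in $[n/2,n]$ and is a multiple of $n$), so all of its arcs equal $2$; the two such orbits are the cycle through the even vertices and the cycle through the odd vertices, which are distinct and, by Lemma \ref{thm:numprimPOfirstfamily}, primitive. Together they form a single pseudo orbit. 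Thus the multi-orbit contribution is $1$ when $n$ is even and $0$ when $n$ is odd, and summing the two contributions yields $2$ in both parities, matching the claim.

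The main obstacle I anticipate is the bookkeeping in the $l=n$ case: I must verify that the only primitive periodic orbits of length $n/2$ are the two all-arc-$2$ cycles, ruling out any mixed-arc orbit of that length via the walk-sum constraint, and that no pseudo orbit with three or more orbits fits inside length $n$. Once the length lower bound $n/2$ is in hand, both points follow quickly, and the apparent coincidence that the even and odd cases both give $2$ is explained by a trade-off: as $n$ passes from odd to even one loses a single-orbit count but gains a two-orbit count, leaving the total unchanged.
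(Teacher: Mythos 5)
Your proof is correct and follows essentially the same route as the paper's: a length lower bound of $n/2$ for nonempty primitive periodic orbits forces any primitive pseudo orbit of length $l<n$ to consist of a single periodic orbit (reducing the count to Proposition \ref{thm:numPOfirstfamily}), and the $l=n$ case splits by parity into one single-orbit pseudo orbit plus, for even $n$, exactly one two-orbit pseudo orbit. The only cosmetic differences are that you derive the length bound directly from walk sums and arc sizes where the paper cites the vanishing of $\binom{l}{n-l}$ for $l<n/2$, and that you identify the two length-$n/2$ orbits explicitly as the all-arc-$2$ even and odd cycles where the paper instead computes $PO(n,n/2)=2$ from its formula.
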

\begin{proof}
Let $\mathcal{G}=C_n^+(1,2)$, with $n>2$.

Suppose $0<l<n/2$. Then $n-l>l$. Thus, by Proposition \ref{thm:numPOfirstfamily}, $PO(n,l)=0$.

\begin{lemma}
\label{thm:primPsOwalksumfirstfamily}
For a graph $C_n^+(1,2)$, where $n>2$, any primitive pseudo orbit of length $0 < l < n$ consists of exactly one primitive periodic orbit of length $l$ and walk sum $n$.
\end{lemma}
\begin{proof}
Let $\bar{\gamma}$ be a primitive pseudo orbit of length $0<l<n$ and walk sum $\alpha$ on $\mathcal{G}$. 
For a pseudo orbit to consist of more than one primitive periodic orbit, one of the periodic orbits would have to have a length of $l/2$ or less, and so less than $n/2$. 
Since there are no such primitive periodic orbits, every primitive pseudo orbit of length $0<l<n$ must consist of exactly one primitive periodic orbit of length $l$ and walk sum $\alpha$. 
Then $0< \alpha \leq 2l < 2n$, so by Lemma \ref{thm:circuitwalksum}, $\alpha =n$.
\end{proof}
\vspace{1em}

Each primitive periodic orbit forms a primitive pseudo orbit containing a single periodic orbit; thus, by Lemma \ref{thm:primPsOwalksumfirstfamily}, the number of primitive pseudo orbits of length $l$, for $0<l<n$, is equal to the number of primitive periodic orbits, $PO(n,l)$.

If $l=n$, and $n$ is odd, since $n/2$ is not an integer there are still no primitive periodic orbits with length $l/2$ or less, and so the number of primitive pseudo orbits is the same as the number of primitive periodic orbits, which is 2. 
If $n$ is even, the number of primitive periodic orbits of length $n/2$ is 
\begin{equation}
PO\left( n,\frac{n}{2} \right) = \frac{n}{\frac{n}{2}} \binom{\frac{n}{2}}{n-\frac{n}{2}} = 2 \ .
\end{equation}
So there is one way of choosing two of these periodic orbits. 
Then there is exactly one pseudo orbit consisting of two primitive periodic orbits of length $n/2$. 
Since there is one primitive periodic orbit of length $n$, there is also one pseudo orbit consisting of this periodic orbit. 
In either case, when $l=n$, the number of primitive pseudo orbits is 2.
\end{proof}

\begin{proposition}
\label{thm:noselfintfirstfamily}
For a graph $C_n^+(1,2)$, with $n>2$, no primitive pseudo orbit of length $0<l\leq n$ has any self-intersection.
\end{proposition}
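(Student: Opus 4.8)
The plan is to prove the stronger structural fact that in every primitive pseudo orbit of length $0 < l \le n$ each vertex of $\mathcal{G}$ serves as the origin of at most one bond; since any self-intersection (an $\ell$-encounter with $\ell \ge 2$) forces at least one vertex to be visited at least twice, establishing this at-most-once property rules out self-intersections entirely. I would organize the argument by the total walk sum of the pseudo orbit, which by Lemma~\ref{thm:circuitwalksum} is a multiple of $n$; since every arc is $1$ or $2$ and the total length is at most $n$, the only possibilities are walk sum $n$ and walk sum $2n$.

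First I would treat the walk-sum-$n$ case. By Lemma~\ref{thm:primPsOwalksumfirstfamily}, for $0 < l < n$ such a pseudo orbit is a single primitive periodic orbit of walk sum $n$, and the unique length-$n$ orbit built entirely from arc-$1$ bonds also has walk sum $n$. Applying Lemma~\ref{thm:circuitpassing} with $m = 1$, each vertex is passed exactly once. Since each appearance of a vertex $v$ (as the origin of its outgoing bond) contributes one passing of $v$, the number of appearances of $v$ is at most its number of passings, namely one. Hence no vertex repeats, and there is no self-intersection.

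The remaining possibility is total walk sum $2n$, which by Proposition~\ref{thm:numPsOfirstfamily} occurs only at $l = n$: a single periodic orbit when $n$ is odd, or the pseudo orbit consisting of the two length-$n/2$ periodic orbits when $n$ is even. Here the passing bound is too weak, since Lemma~\ref{thm:circuitpassing} now only gives at most $m = 2$ appearances per vertex. Instead I would use that a total length $l = n$ together with walk sum $2n$ forces all $n$ bonds to have arc $2$, because the walk sum equals the length plus the number of arc-$2$ bonds. With every bond of arc $2$, each vertex has the unique successor $v \mapsto v+2$, so each periodic orbit traverses a coset of the subgroup $\langle 2\rangle \le \mathbb{Z}/n\mathbb{Z}$ exactly once. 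When $n$ is odd this coset is all of $\mathbb{Z}/n\mathbb{Z}$, giving one $n$-cycle that meets every vertex once; when $n$ is even it splits into the disjoint even- and odd-vertex cycles, each of length $n/2$. In both cases the vertices occurring in the pseudo orbit are distinct, so again no vertex repeats.

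I expect the walk-sum-$2n$ case to be the main obstacle, precisely because the clean passing count from Lemma~\ref{thm:circuitpassing} degrades to $m = 2$ and no longer excludes a repeated vertex by itself. The decisive observation is that at maximal length $l = n$ a walk sum of $2n$ leaves no room for any arc-$1$ bond; once every bond has arc $2$, the coset structure of $\langle 2\rangle$ in $\mathbb{Z}/n\mathbb{Z}$ makes the disjointness of the visited vertices transparent and finishes the proof.
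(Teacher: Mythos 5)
Your proposal is correct and follows essentially the same route as the paper's proof: the same case split on the walk sum ($\alpha = n$ versus $\alpha = 2n$) via Lemma~\ref{thm:circuitwalksum}, the same use of Lemma~\ref{thm:circuitpassing} to exclude repeated vertices when $\alpha = n$, and the same observation that $\alpha = 2n$ forces $l = n$ with all bonds of arc~$2$. Your coset-of-$\langle 2\rangle$ description of the final case is just a more algebraic phrasing of the explicit even/odd cycles the paper writes out.
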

\begin{proof}
Let $\mathcal{G}=C_n^+(1,2)$, with $n>2$.

Let $\bar{\gamma}$ be a primitive pseudo orbit on $\mathcal{G}$ with length $0<l\leq n$ and walk sum $\alpha$. 
Since $0< \alpha \leq 2n$, by Lemma \ref{thm:circuitwalksum}, either $\alpha=n$ or $\alpha=2n$.

Consider the case where $\alpha=n$. 
By Lemma \ref{thm:circuitpassing}, $\bar{\gamma}$ can only pass each vertex of $\mathcal{G}$ once, which means there can be no repeated vertices and thus no self-intersection; thus, any primitive pseudo orbit of length $0<l\leq n$ and walk sum $n$ has no self-intersection.

In the case where $\alpha=2n$, which only occurs when $l=n$, there are two subcases: $n$ is odd or $n$ is even. 
If $n$ is odd, there is only one pseudo orbit with $\alpha=2n$. 
This pseudo orbit consists of the periodic orbit corresponding to $0,2,\dots,n-1,1,3,\dots,n-2,0$, which contains each vertex exactly once and so has no self-intersection. 
If $n$ is even, there is again one pseudo orbit with $\alpha=2n$, consisting of the periodic orbits corresponding to $0,2,\dots,n-2,0$ and $1,3,\dots,n-1,1$. 
These do not share any vertices or contain any repeated vertices, and so this pseudo orbit also has no self-intersection.

In every case, there exists no pseudo orbit on $\mathcal{G}$ of length $0<l\leq n$ with any self-intersection.
\end{proof}

\subsection{Second Family of Graphs \texorpdfstring{$C_n^+(1,3)$}{}}

For the second family, since some of the pseudo orbits of length $0<l\leq n$ do have self-intersections, we directly count the number of pseudo orbits with no self-intersection and with a given number of 2-encounters of length zero.

\begin{proposition}
\label{thm:numPsOnoselfintsecondfamily}
For a graph $C_n^+(1,3)$, with $n>3$, the number of pseudo orbits of length $0<l\leq n$ with no self-intersection is given by 
\begin{equation}
    \text{PsO$_0$}(n,l) = \begin{cases}
    \frac{n}{l} \binom{l}{(n-l)/2} + \frac{2n}{l} \binom{l/2}{n-l} & \text{if } l<n\\
    2 & \text{if } l=n, \text{ and } $n$ \text{ is odd} \\
    4 & \text{if } l=n, \text{ and } $n$ \text{ is even}
    \end{cases} \ .
\end{equation}
\end{proposition}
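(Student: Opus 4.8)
The plan is to fix $\mathcal{G}=C_n^+(1,3)$ and count admissible \emph{bond sets} rather than pseudo orbits directly. Since a pseudo orbit with no self-intersection is precisely a set of distinct bonds in which no vertex appears more than once as an origin, Lemma \ref{thm:bondstoPsO} (with $N=0$) tells us each such bond set determines exactly one pseudo orbit, so the two counts coincide. By Lemma \ref{thm:circuitwalksum} the walk sum $\alpha$ is a multiple of $n$, and since every arc is $1$ or $3$ and $0<l\le n$, we have $\alpha\in\{n,2n,3n\}$, with $\alpha=3n$ possible only when $l=n$. The central tool is Lemma \ref{thm:circuitpassing}: a pseudo orbit of walk sum $mn$ passes each vertex exactly $m$ times, where (by the remark following that lemma, with $d=2$) a vertex $v$ is passed once for its own appearance plus once for each arc-$3$ bond with origin in $\{v-2,v-1\}$. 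I would organize the proof by the value of $\alpha$ and sum the contributions.

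First I would dispose of $\alpha=n$. Because each periodic orbit has walk sum a positive multiple of $n$, a walk-sum-$n$ pseudo orbit is a single periodic orbit; it passes each vertex once, hence uses each vertex at most once, so it automatically has no self-intersection. Thus this contribution equals the number of primitive periodic orbits of length $l$ with walk sum $n$, which the proof of Proposition \ref{thm:numPOsecondfamily} (together with Lemma \ref{thm:numprimPOfirstfamily}) evaluates as $\frac{n}{l}\binom{l}{(n-l)/2}$ — exactly the first term.

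The crux is $\alpha=2n$, where each vertex is passed twice. Let $T$ be the set of arc-$3$ origins, so $\bar T=\mathbb{Z}_n\setminus T$ has $l/2$ elements, and picture each arc-$3$ bond from $a$ as a ``domino'' covering $\{a+1,a+2\}$; the passing count then forces each used vertex to be covered once and each unused vertex twice. I would prove two structural facts: (i) $\bar T$ is an independent set on the cycle $C_n$, since two consecutive non-arc-$3$-origins would leave the next vertex passed fewer than twice; and (ii) every gap of $\bar T$ has length at most $3$, since a gap of length $\ge 4$ would contain an arc-$3$ origin passed three times. Conversely, given any size-$(l/2)$ independent set $\bar T$ with all gaps in $\{2,3\}$, I would assign arc $3$ to $\mathbb{Z}_n\setminus\bar T$, arc $1$ to those vertices of $\bar T$ whose preceding gap has length $2$, and leave the remaining $\bar T$-vertices unused, then verify that the induced out-map is a bijection on the used set (so the bonds close into periodic orbits) with no repeated vertex. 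This gives a bijection between admissible walk-sum-$2n$ bond sets and independent sets of $C_n$ of size $l/2$ with $a=3l/2-n$ gaps of length $2$ and $b=n-l$ gaps of length $3$; the standard cyclic-composition count of these is $\frac{n}{l/2}\binom{l/2}{n-l}=\frac{2n}{l}\binom{l/2}{n-l}$, the second term.

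Finally, for $l=n$ there are no unused vertices and one must add the $\alpha=3n$ contribution: the all-arc-$3$ bond set is the unique admissible set of walk sum $3n$ and has no self-intersection, contributing $1$ (forming a single $n$-cycle when $3\nmid n$ and three $(n/3)$-cycles when $3\mid n$), while $\alpha=n$ again contributes $1$ (all arc $1$), and $\alpha=2n$ now needs $b=n-l=0$ gaps of length $3$, forcing all gaps equal to $2$; this is possible only when $n$ is even, where $\bar T$ is one of the two alternating sets, giving $2$. Summing yields $1+0+1=2$ for $n$ odd and $1+2+1=4$ for $n$ even. I expect the main obstacle to be precisely the $\alpha=2n$ step: one cannot simply count circuits through a fixed vertex and divide by $l$, because an admissible walk-sum-$2n$ pseudo orbit may comprise either one or two periodic orbits, so the naive periodic-orbit bookkeeping fails. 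The independent-set reformulation circumvents this by counting bond sets directly, and the delicate point is establishing that the gap condition $2\le\text{gap}\le3$ is \emph{exactly} equivalent to admissibility in both directions.
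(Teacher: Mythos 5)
Your proposal is correct and follows essentially the same route as the paper's proof: the same trichotomy of walk sums $\alpha \in \{n,2n,3n\}$, the same use of Lemma \ref{thm:circuitpassing} to control how often vertices are passed and of Lemma \ref{thm:bondstoPsO} (with $N=0$) to pass from admissible bond sets to unique pseudo orbits, and the same three contributions $\frac{n}{l}\binom{l}{(n-l)/2}$, $\frac{2n}{l}\binom{l/2}{n-l}$, and $1$. Your static encoding of the $\alpha=2n$ case via independent sets $\bar{T}$ of $C_n$ with cyclic gaps in $\{2,3\}$ is a repackaging (arguably a cleaner one) of the paper's dynamic pair-chasing construction --- your gap-$2$ and gap-$3$ elements of $\bar{T}$ correspond exactly to the paper's bond pairs of the forms $(v,v+3),(v+1,v+2)$ and $(v,v+3),(v+1,v+4)$, and your anchored cyclic-composition count $\frac{n}{l/2}\binom{l/2}{n-l}$ is the paper's count of $n$ starting pairs divided by the $l/2$ pairs per orbit.
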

\begin{proof}
Let $\mathcal{G}=C_n^+(1,3)$, with $n>3$.

Let $\bar{\gamma}$ be a primitive pseudo orbit on $\mathcal{G}$ of length $l$ and walk sum $\alpha$. 
Since $0< \alpha \leq 3n$, by Lemma \ref{thm:circuitwalksum}, there are three possible cases: $\alpha=n,\alpha=2n$, and $\alpha=3n$.

In the first case where $\alpha=n$, the only way to construct a primitive pseudo orbit of length $l$ is with one primitive periodic orbit of length $l$ and walk sum $n$. 
Thus, the number of primitive pseudo orbits of length $l$ produced by this case is equal to the number of primitive periodic orbits of length $l$ and walk sum $n$, which was shown in Proposition \ref{thm:numPOsecondfamily} to be
\begin{equation}
\label{eq:numprimwalkn}
    \frac{n}{l} \binom{l}{\frac{n-l}{2}} \ .
\end{equation}
Since $\alpha=n$, $\bar{\gamma}$ passes any vertex $v$ only once by Lemma \ref{thm:circuitpassing}, which means $v$ cannot appear more than once; therefore, it has no self-intersection. 
Thus, the number of primitive pseudo orbits produced by this case with no self-intersection is \eqref{eq:numprimwalkn}.

Consider the second case where $\alpha=2n$. 
If $B$ is the number of bonds of arc 1 that appear in $\bar{\gamma}$, then $\alpha=B+3(l-B)=3l-2B$. 
Thus, $2n=3l-2B$, and $B=(3l-2n)/2$. 
(Note that this is not an integer if $l$ is odd. 
Thus, a pseudo orbit with $\alpha=2n$ must have an even length.)

We now construct all possible pseudo orbits with $\alpha=2n$.

Consider the pairs of vertices of the form $v,v+1$ where the bond $(v,v+1)$ does not appear in $\bar{\gamma}$. 
The number of such pairs is $n-B=n-(3l-2n)/2=3l/2$. 
Since $l\geq 1$, there is at least one pair. There are three possible categories of pairs: one where neither $v$ nor $v+1$ appears in $\bar{\gamma}$, one where exactly one of $v$ and $v+1$ appears in $\bar{\gamma}$, and one where both $v$ and $v+1$ appear in $\bar{\gamma}$. 
Since $\alpha =2n$, by Lemma \ref{thm:circuitpassing}, $\bar{\gamma}$ must pass $v-1$ exactly twice. 
In order for $\bar{\gamma}$ to pass $v-1$ twice, the bond $(v-3,v)$, the bond $(v-2,v+1)$, or the vertex $v-1$ must appear in $\bar{\gamma}$ twice (collectively). 
If $v$ is not included in $\bar{\gamma}$, the bond $(v-3,v)$ cannot appear and if $v+1$ does not appear in $\bar{\gamma}$, the bond $(v-2,v+1)$ cannot appear. 
Thus $v-1$ must appear in $\bar{\gamma}$ twice, meaning the orbit would have a self-intersection. 
Therefore, there are no pairs of vertices in the first category for a pseudo orbit $\bar{\gamma}$ with no self-intersection.

Consider the second category. 
Let $u$ be one of the $n-l$ vertices that do not appear in $\bar{\gamma}$. 
There will be two pairs of vertices of the form $v,v+1$ containing $u$, and for each of these the bond $(v,v+1)$ does not appear in $\bar{\gamma}$. 
These pairs must contain only the one vertex that does not appear in $\bar{\gamma}$, since the case where both do not appear is impossible; this means that any vertex that is adjacent to $u$ via a bond of the form $(v,v+1)$ appears in $\bar{\gamma}$. 
Then there are $2(n-l)$ pairs of vertices that fall under the second category.

There are $n$ total pairs of the form $v,v+1$, so, given the number of pairs in the first two categories, the number of pairs in the third category, where both vertices appear in $\bar{\gamma}$, but the bond between them is not, is
\begin{equation}
n-\frac{3l-2n}{2}-2(n-l) = n-\frac{2n-l}{2} = \frac{l}{2} \ .
\end{equation}
Thus, any pseudo orbit with walk sum $2n$ must contain exactly $l/2$ pairs of consecutive vertices $v,v+1$ such that the bond $(v,v+1)$ does not appear in $\bar{\gamma}$, which is at least one pair since $l \geq 2$.

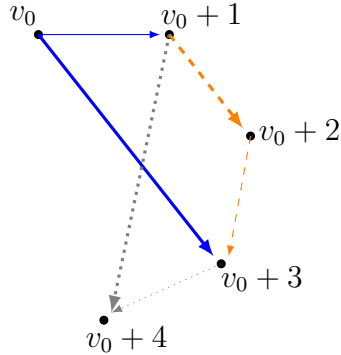
\begin{figure}[!htb]
\centering
\begin{tikzpicture}

\draw[fill=black] (-0.87,1.8) circle (1.5pt);
\draw[fill=black] (0.87,1.8) circle (1.5pt);
\draw[fill=black] (1.95,0.45) circle (1.5pt);
\draw[fill=black] (1.56,-1.25) circle (1.5pt);
\draw[fill=black] (0,-2) circle (1.5pt);
\node at (-1.07,2.05) {$v_0$};
\node at (1.3,2.05) {$v_0 +1$};
\node at (2.6,0.5) {$v_0 +2$};
\node at (2.1,-1.45) {$v_0 +3$};
\node at (0.3,-2.25) {$v_0 +4$};
\draw[blue, -latex] (-0.87,1.8) -- (0.75,1.8);
\draw[orange, dashed, very thick, -latex] (0.87,1.8) -- (1.85,0.55);
\draw[orange, dashed, -latex] (1.95,0.45) -- (1.66,-1.15);
\draw[gray, dotted, -latex] (1.56,-1.25) -- (0.1,-1.9);
\draw[blue, very thick, -latex] (-0.87,1.8) -- (1.46,-1.15);
\draw[gray, dotted, very thick, -latex] (0.87,1.8) -- (0.1,-1.9);

\end{tikzpicture}
\caption{The vertex pair $v_0$, $v_0 +1$ is of the third category; both vertices appear in $\bar{\gamma}$, but the bond $(v_0, v_0 +1)$ does not (solid/blue), so the only outgoing bond at $v_0$ that appears in $\bar{\gamma}$ is $(v_0, v_0 +3)$ (bold solid/blue). 
There are two options for outgoing bonds at $v_0 + 1$ (bold dashed/orange and bold dotted/gray), but for either one that is selected, there is a corresponding choice of bond that cannot simultaneously appear in $\bar{\gamma}$ (dashed/orange and dotted/gray), in order to avoid self-intersections in $\bar{\gamma}$.}
\label{fig:bondselection}
\end{figure}

Take any such pair of vertices, $v_0,v_0+1$, and consider the two outgoing bonds at $v_0$ and $v_0+1$. 
The outgoing bond at $v_0$ must be $(v_0,v_0+3)$, since $(v_0,v_0+1)$ is not used. 
The outgoing bond at $v_0+1$ may be either $(v_0+1,v_0+2)$ or $(v_0+1,v_0+4)$. 
In either case, the pair of terminal vertices for these bonds, either $v_0+2,v_0+3$ or $v_0+3,v_0+4$, takes the form $v_1,v_1+1$, and like $v_0,v_0+1$, falls into the third category: both vertices appear in $\bar{\gamma}$, and, since they both already have incoming bonds, the bond between them cannot appear in $\bar{\gamma}$; see figure \ref{fig:bondselection}.
Since either $(v_0+1,v_0+2)$ is used as a bond, or $v_0+2$ is passed by both bonds (in which case it does not appear in $\bar{\gamma}$ since it has already been passed twice), there cannot be another such pair in this form between $v_0,v_0+1$ and the pair of terminal vertices. 
Thus, all $l/2$ of the pairs in the third category may be found in progression by this process, with a pair of bonds connecting each pair of vertices to the next pair of vertices, and the final pair of vertices connected by a pair of bonds to the first pair of vertices. 
Because of the way they are constructed, each of these pairs of bonds will either take the form $(v,v+3)$ and $(v+1,v+2)$, or $(v,v+3)$ and $(v+1,v+4)$. 
The only way for a vertex not to appear in $\bar{\gamma}$ is when the second form, which passes the single vertex $v+2$ twice, does appear. 
Then the vertex $v+2$ cannot appear elsewhere in the orbit, by Lemma \ref{thm:circuitpassing}. 
Since there must be $n-l$ vertices that are not used, this second form must be used $n-l$ times, and the first form is used for the remaining pairs. 
Let $\mathcal{B}$ be the set of bonds selected in these pairs. 
The bonds of $\mathcal{B}$ create a walk from each of the starting vertices $v_0,v_0+1$; the walk sum is 4 for each pair plus 2 extra for each pair of the second form, which sums to $4(l/2)+2(n-l)=2l+2n-2l=2n$; thus, the final pair is then $v_0+n,v_0+n+1$, which is equivalent to $v_0,v_0+1$. 
Then the set of origin vertices of $\mathcal{B}$ and the set of terminal vertices of $\mathcal{B}$ are the same, so by Lemma \ref{thm:bondstoPsO}, at least one pseudo orbit can be formed from the bonds of $\mathcal{B}$; this pseudo orbit has length $2(l/2)=l$ and walk sum $2n$, and thus is a possible choice for $\bar{\gamma}$. 
Since there are $l/2$ pairs of bonds in a circuit of length $l$, the $n-l$ pairs of the second form can be chosen in $\binom{l/2}{n-l}$ ways from a given starting pair $v_0,v_0+1$. 
(We can interpret this combination to be equal to 0 if $l/2$ is not an integer, since the case where the walk sum of $\bar{\gamma}$ is $2n$ is impossible for odd $l$.) 
This decides all $l$ of the bonds in $\mathcal{B}$. 
Since there is no self-intersection, there are no repeated vertices, so by Lemma \ref{thm:bondstoPsO}, there is one pseudo orbit for each choice of bonds; thus, there are $\binom{l/2}{n-l}$ pseudo orbits that can be constructed from the starting pair $v_0,v_0+1$.

There are $n$ possible starting pairs of vertices on $\mathcal{G}$, one for each vertex that can be chosen as $v_0$, and there are $l/2$ different pairs in each pseudo orbit $\bar{\gamma}$ which could have been used as the starting pair $v_0,v_0+1$ (that is, both vertices appear in $\bar{\gamma}$ but the bond between them does not), so the total number of primitive pseudo orbits with no self-intersections produced by this case is
\begin{equation}
\label{eq:numprimwalk2n}
\frac{2n}{l} \binom{\frac{l}{2}}{n-l} \ .
\end{equation}

Consider the case where $\alpha=3n$, which can only occur if $l=n$. 
In this case, every bond must have arc 3, and so all $n$ bonds of arc 3 appear in $\bar{\gamma}$. 
If $n$ is not divisible by 3, by Lemma \ref{thm:numprimPOsecondfamily}, there is one primitive periodic orbit with walk sum $3n$, which is the only way to use all of the bonds of arc 3; thus, the pseudo orbit consisting of this one periodic orbit is the only one added in this case. 
If $n$ is divisible by 3, then these bonds create three periodic orbits of length $n/3$, corresponding to the circuits,
\begin{equation}
    0,3,\dots,n-3,0, \qquad 1,4,\dots,n-2,1, \quad \textrm{and} \quad 2,5,\dots,n-1,2 \ .
\end{equation}
Since the three periodic orbits have no intersection, there is exactly one pseudo orbit consisting of the three of them, which has no self-intersection, and their combined length is $l=n$; so again, there is one pseudo orbit added. 
Either way, the case where $\alpha =3n$ produces exactly one primitive pseudo orbit with no self-intersection.

The total number of primitive pseudo orbits with no self-intersection, if $0<l<n$, is given by the sum of the first two cases, \eqref{eq:numprimwalkn} and \eqref{eq:numprimwalk2n}.

If $l=n$, one must be added for the third case, and so the number is
\begin{equation}
    \frac{n}{l} \binom{n}{\frac{n-l}{2}} + \frac{2n}{l} \binom{\frac{l}{2}} {n-l} +1 \ .
\end{equation}
When $n$ is odd, 
\begin{equation}  
\frac{n}{l} \binom{n}{\frac{n-l}{2}} + \frac{2n}{l} \binom{\frac{l}{2}} {n-l} +1 =  1 \binom{n}{0} + 2 (0) +1 =  2 \ .  
\end{equation}
When $n$ is even,
\begin{equation}  
\frac{n}{l} \binom{n}{\frac{n-l}{2}} + \frac{2n}{l} \binom{\frac{l}{2}} {n-l} +1 =  1 \binom{n}{0} + 2 \binom{\frac{n}{2}} {0} +1 =  4  \ .
\end{equation}
\end{proof}

\begin{proposition}
\label{thm:numPsOsecondfamily2enc}
For a graph $C_n^+(1,3)$, with $n>3$, the number of pseudo orbits of length $0<l\leq n$ with $N>0$ 2-encounters of length zero, and no other self-intersection, is given by
\begin{equation}
\label{eq:numPsOsecondfamily2enc}
    PsO_N(n,l)=2^N \frac{n}{N} \binom{\frac{l}{2}-N}{n-l+N} \binom{\frac{l}{2}-N-1}{N-1} \ .
\end{equation}
\end{proposition}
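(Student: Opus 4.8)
The plan is to reduce the count first to balanced bond sets and then to a transparent enumeration of cyclic binary sequences. The first step is to dispose of the walk sum. By Lemma~\ref{thm:circuitwalksum} a pseudo orbit of length $l\le n$ has walk sum $n$, $2n$, or $3n$, and I claim only $\alpha=2n$ can produce a $2$-encounter. Indeed, if $\alpha=n$ then by Lemma~\ref{thm:circuitpassing} every vertex is passed once, so no vertex appears twice and $N=0$; and if $\alpha=3n$ (forcing $l=n$ with every bond of arc $3$) each vertex is the origin of exactly one arc-$3$ bond, so again every vertex appears once and $N=0$. Thus for $N>0$ I restrict to $\alpha=2n$, where every vertex is passed exactly twice. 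Since a pseudo orbit of the desired type uses each bond at most once, no bond, and hence no subsequence of length $\ge 1$, can repeat, so every self-intersection has encounter length zero; the condition ``$N$ $2$-encounters of length zero and no other self-intersection'' is therefore equivalent to ``exactly $N$ vertices appear twice and all others at most once.''

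This lets me count via bond sets. To each such pseudo orbit I associate its set $\mathcal{B}$ of bonds; then $o(\mathcal{B})=t(\mathcal{B})$, every vertex occurs in $o(\mathcal{B})$ at most twice, and exactly $N$ occur twice, so by Lemma~\ref{thm:bondstoPsO} each admissible $\mathcal{B}$ arises from exactly $2^N$ pseudo orbits (all sharing the same $N$ doubled vertices, hence all of the desired type). It therefore suffices to count admissible bond sets and multiply by $2^N$.

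Next I would encode a bond set by two cyclic indicator sequences $x_v=\mathbf{1}[(v,v+1)\in\mathcal{B}]$ and $y_v=\mathbf{1}[(v,v+3)\in\mathcal{B}]$ for $v\in\mathbb{Z}_n$. Balance $o(\mathcal{B})=t(\mathcal{B})$ reads $x_v+y_v=x_{v-1}+y_{v-3}$ at each vertex; setting $w_v=x_v+y_{v-2}+y_{v-1}+y_v$ one checks $w_v=w_{v-1}$, so $w_v$ is a constant $C$ and $x_v=C-(y_{v-2}+y_{v-1}+y_v)$. Comparing the arc-$1$ and arc-$3$ totals (which $\alpha=2n$ and the length $l$ fix) forces $C=2$, so $x_v=2-(y_{v-2}+y_{v-1}+y_v)$, and $x_v\in\{0,1\}$ becomes the condition that every three consecutive $y$'s sum to $1$ or $2$, equivalently that the cyclic word $y$ has all maximal runs of length $1$ or $2$. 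A vertex is \emph{doubled} ($x_v=y_v=1$) exactly when $y_{v-2}=y_{v-1}=0,\ y_v=1$, i.e.\ at the start of a $1$-run following a length-$2$ $0$-run; it is \emph{unused} ($x_v=y_v=0$) exactly at the start of a $0$-run following a length-$2$ $1$-run. Hence $N$ equals the number of length-$2$ $0$-runs, and the number $n-l+N$ of unused vertices equals the number of length-$2$ $1$-runs.

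Finally I would enumerate these labeled cyclic sequences. Writing $R$ for the common number of $0$-runs and $1$-runs, length and $1$-count bookkeeping give $R=l/2-N$, with $N$ of the $0$-runs and $n-l+N$ of the $1$-runs of length $2$. To count labeled sequences I mark one of the $N$ doubled vertices and cut the cycle just before it: this is a bijection between pairs (sequence, marked doubled vertex) and pairs (starting position in $\mathbb{Z}_n$, linear run-word), where the linear word begins with a $1$-run and ends with the marked length-$2$ $0$-run. Such a word is determined by choosing which $Z=n-l+N$ of the $R$ $1$-runs have length $2$ and which $N-1$ of the remaining $R-1$ $0$-runs (the last being the forced ``$00$'') have length $2$, giving $\binom{R}{Z}\binom{R-1}{N-1}$ words. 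Since there are $n$ starting positions and the mark was one of $N$ doubled vertices, the number of admissible bond sets is $\tfrac{n}{N}\binom{l/2-N}{\,n-l+N\,}\binom{l/2-N-1}{\,N-1\,}$, and multiplying by $2^N$ yields \eqref{eq:numPsOsecondfamily2enc}. The main obstacle is the third paragraph: turning the nonlocal balance condition into the local run-length constraint via the identity $x_v=2-(y_{v-2}+y_{v-1}+y_v)$, and then handling the cyclic (rather than linear) enumeration correctly, where the marking-and-cutting bijection is exactly what produces the factor $n/N$ without over- or under-counting rotations.
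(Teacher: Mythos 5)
Your proof is correct, and it arrives at \eqref{eq:numPsOsecondfamily2enc} by a genuinely different route for the central count. You and the paper share the same outer skeleton: the reduction to walk sum $2n$ via Lemmas \ref{thm:circuitwalksum} and \ref{thm:circuitpassing}, the factor $2^N$ from Lemma \ref{thm:bondstoPsO}, and the final $n/N$ normalization. The difference is in how the admissible bond sets are characterized and enumerated. The paper does this dynamically: starting from a repeated vertex $v_0$ it follows the orbit, shows that each step consists of a pair of bonds of the form $(v-1,v+2),(v,v+1)$ or $(v-1,v+2),(v,v+3)$, and counts arrangements of these pairs together with the positions of the remaining $N-1$ repeated vertices. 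You do it statically: the balance condition $o(\mathcal{B})=t(\mathcal{B})$, made local through the invariant $w_v$, yields $x_v=2-(y_{v-2}+y_{v-1}+y_v)$, which converts the problem into counting labeled cyclic binary words of length $n$ whose runs all have length $1$ or $2$, with exactly $N$ length-two $0$-runs (the doubled vertices) and $n-l+N$ length-two $1$-runs (the unused vertices). Each approach buys something. The paper's construction stays visibly tied to the orbit structure, but it must argue two side conditions by hand --- that vertices adjacent by an arc-$1$ bond cannot both be repeated, and that the bond pair closing the loop at $v_0$ cannot host another repeated vertex, which is what makes the second binomial $\binom{l/2-N-1}{N-1}$ rather than $\binom{l/2-N}{N-1}$ --- whereas in your encoding both conditions are automatic consequences of the run-length constraints. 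Your marking-and-cutting bijection also makes the $n/N$ factor exact: the paper's closing step (multiply by $n$ possible starting vertices, divide by the $N$ repeated vertices that could serve as $v_0$) asserts the same double count informally, while your identity, (number of cyclic words) $\times\, N = n \,\times$ (number of linear run-words), justifies it cleanly. The one thing to make explicit in a polished write-up is the bookkeeping check that fixing $N$ and $Z=n-l+N$ forces $R=l/2-N$ and hence $|\mathcal{B}|=l$, so the enumerated words correspond precisely to pseudo orbits of length $l$ and to no others; your sketch contains this, and it should be promoted from an aside to a stated verification.
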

\begin{proof}
Let $\mathcal{G}=C_n^+(1,3)$, with $n>3$.

Let $\bar{\gamma}$ be a pseudo orbit on $\mathcal{G}$ with length $0<l\leq n$ and walk sum $\alpha$. 
Since $0<\alpha \leq 3n$, by Lemma \ref{thm:circuitwalksum}, the only possible values of $\alpha$ are $n$, $2n$, and $3n$.

Consider the case where $\alpha=n$. 
By Lemma \ref{thm:circuitpassing}, $\bar{\gamma}$ passes each vertex exactly once, meaning it cannot use any vertex more than once, and so has no self-intersection. 
Thus, this case produces no pseudo orbits with 2-encounters of length zero.

Consider the case where $\alpha=3n$, which can only occur when $l=n$. 
Then all the bonds of $\bar{\gamma}$ must have arc 3, meaning that any vertex $v$ used in $\bar{\gamma}$ must be preceded by the incoming bond $(v-3,v)$ and followed by the outgoing bond $(v,v+3)$ in $\bar{\gamma}$. 
If there is any self-intersection in $\bar{\gamma}$, then some vertex $v$ is used more than once, which means the bonds $(v-3,v)$ and $(v,v+3)$ must also be used more than once, creating a 2-encounter of length at least two. 
Thus $\bar{\gamma}$ has no self-intersection, a 2-encounter of length greater than zero, or an $\ell$-encounter where $\ell>2$. 
So this case also produces no relevant pseudo orbits, and we only need to consider the case where $\alpha=2n$.

Suppose $\alpha=2n$. 
If $\bar{\gamma}$ has at least one 2-encounter of length zero, at least one vertex appears twice; let $v_0$ be such a vertex. 
Then both the bonds $(v_0,v_0+1)$ and $(v_0,v_0+3)$ appear in $\bar{\gamma}$. 
Consider the vertex $v_0+1$. This vertex cannot appear twice, since it is also passed by $(v_0,v_0+3)$, and so would be passed three times, which contradicts Lemma \ref{thm:circuitpassing} since $\alpha =2n$. 
Then $v_0+1$ is only used once, meaning exactly one of the two bonds $(v_0+1,v_0+2)$ and $(v_0+1,v_0+4)$ appears in $\bar{\gamma}$. 
This, together with $(v_0,v_0+3)$, creates a pair of bonds from a starting vertex pair $v_0,v_0+1$ to another pair of vertices $v-1,v$, which is equal to either $v_0+2,v_0+3$ or $v_0+3,v_0+4$. 
The vertex $v-1$ cannot appear a second time in $\bar{\gamma}$, since it is passed by the bond $(v-3,v)$; therefore, only one of the bonds $(v-1,v)$ and $(v-1,v+2)$ may appear in $\bar{\gamma}$. 
If $(v-1,v)$ appears, then $v$ appears twice, since both incoming bonds at $v$ appear in $\bar{\gamma}$. 
If $(v-1,v+2)$ is used instead, then $v$ cannot appear twice, since it has already been passed twice, by its first appearance and by $(v-1,v+2)$. 
Thus, exactly one of the bonds $(v,v+1)$ and $(v,v+3)$ appears. 
This creates a pair of bonds from $v-1,v$ to another pair of vertices, either $v+1,v+2$ or $v+2,v+3$, again of the form $v'-1,v'$. 
Again, if the bond between these vertices appears in $\bar{\gamma}$, then $v'$ appears twice; if not, another pair of bonds can be found, and the process continued until a vertex that appears twice is reached. 
Each of the pairs of bonds used in this construction either takes the form $(v-1,v+2),(v,v+1)$, or the form $(v-1,v+2),(v,v+3)$.

Call the first repeated vertex found by this process $v_1$. 
Then we have a set of bonds which form two walks from $v_0$ to $v_1$: $(v_0,v_0+1)$, one or more pairs of bonds connecting vertex pairs between $v_0,v_0+1$ and $v_1-1,v_1$, and $(v_1-1,v_1)$. 
By the construction, no vertex between $v_0$ and $v_1$ appears more than once, and so $v_1$ must be the next repeated vertex after $v_0$. 
In addition, this finds all the bonds that appear in $\bar{\gamma}$ between $v_0$ and $v_1$, since each vertex from $v_0+1$ to $v_1-1$ is passed twice. 
Let $N$ be the number of 2-encounters of length zero in $\bar{\gamma}$. 
If $N=1$, then $v_1=v_0$, and so every vertex of $\mathcal{G}$ is passed twice and there can be no other bonds in $\bar{\gamma}$. 
Otherwise, there is a set of bonds which forms two walks from $v_0$ to $v_1$, another set which forms two walks between $v_1$ and the next repeated vertex, and so on until a final set which forms two walks from a repeated vertex to $v_0$. 
Let $\mathcal{B}$ be the union of these sets, and note that this is a single set if $N=1$; since every vertex of $\mathcal{G}$ is passed twice by the bonds of $\mathcal{B}$, this set contains all the bonds of $\bar{\gamma}$.

The bonds of $\mathcal{B}$ will be in pairs of either the form $(v-1,v+2),(v,v+1)$ or the form $(v-1,v+2),(v,v+3)$, except for two bonds $(v-1,v)$ and $(v,v+1)$ when $v$ is a repeated vertex in $\bar{\gamma}$. 
Then in order for $\bar{\gamma}$ to have length $l$, $l-2N$ bonds must appear in these two types of pairs, meaning there will be $(l-2N)/2$ pairs of bonds. 
The only way for a vertex to not appear in $\bar{\gamma}$ is when a pair of bonds of the form $(v-1,v+2),(v,v+3)$ appears, which passes the vertex $v+1$ twice without the vertex itself appearing. 
Since there are $N$ vertices that appear twice in $\bar{\gamma}$, with the remaining vertices of $\bar{\gamma}$ appearing once, there are a total of $l-N$ vertices that appear at least once, so the number of vertices that do not appear in the $\bar{\gamma}$ is $n-(l-N)=n+N-l$, and this is the number of pairs of bonds of the form $(v-1,v+2),(v,v+3)$ that must appear. 
Starting two walks from the repeated vertex $v_0$, the bond $(v_0,v_0+1)$ adds 1 to the walk sum; each pair of bonds adds 4, with the pairs of the form $(v-1,v+2),(v,v+3)$ adding 2 extra; each repeated vertex $v_i$ has the two bonds $(v_i-1,v_i)$ and $(v_i,v_i+1)$ associated with it, which add 2 more; and the last bond $(v_0-1,v_0)$ adds 1 more. 
The two walks, by the construction, reach another repeated vertex, and they have a combined walk sum of
\begin{equation}  
\alpha = 1+\frac{4(l-2N)}{2}+2(n+N-l)+2(N-1)+1 =  2n \ .
\end{equation}
Thus, the two walks each have origin vertex $v_0$ and walk sum $n$, so their shared terminal vertex is $v_0+n$, which is the same vertex as $v_0$. 
Thus, since the two walks use all the bonds of $\mathcal{B}$, $o(\mathcal{B}) = t(\mathcal{B})$; therefore, by Lemma \ref{thm:bondstoPsO}, at least one pseudo orbit can be constructed from the bonds of $\mathcal{B}$. 
The walk sum of the pseudo orbit is $2n$, and its length is $l$ since $\mathcal{B}$ has $l$ elements; thus, it is a possible choice of $\bar{\gamma}$.

Of the $(l-2N)/2$ pairs of bonds, any $n+N-l$ of them may be chosen to take the form $(v-1,v+2),(v,v+3)$ (with the rest taking the form $(v-1,v+2),(v,v+1)$), creating
\begin{equation}
\label{eq:firstcomb}
    \binom{\frac{l}{2}-N}{n-l+N}
\end{equation}
different possible arrangements. 
In addition, each of the repeated vertices $v_i$, other than $v_0$, is of the form $v_i=v+2$ or $v_i=v+3$ for some pair of bonds.  
However, no two repeated vertices can be the terminal vertices for the same pair of bonds, as no adjacent vertices by a bond of arc 1 can both be repeated.  
Moreover, this pair of bonds cannot be the last one selected, as $v_0$ is either $v+2$ or $v+3$ for some $v$. 
There are then $((l-2N)/2)-1$ possible locations for $N-1$ repeated vertices, which creates
\begin{equation}
\label{eq:secondcomb}
    \binom{\frac{l}{2}-N-1}{N-1}
\end{equation}
different possibilities. 
These choices determine all of the bonds in $\bar{\gamma}$, so the number of different ways to select bonds for a pseudo orbit from a starting repeated vertex $v_0$ is the product of \eqref{eq:firstcomb} and \eqref{eq:secondcomb}.

Once the bonds have been selected, by Lemma \ref{thm:bondstoPsO}, the number of different pseudo orbits that can be formed with a set of bonds, since there are $N$ repeated vertices, is $2^N$. 
The total number of pseudo orbits that can be constructed from a repeated vertex $v_0$ (that is, the total number of pseudo orbits in which a particular vertex $v_0$ is repeated) is then
\begin{equation}
    2^N \binom{\frac{l}{2}-N}{n-l+N} \binom{\frac{l}{2}-N-1}{N-1} \ .
\end{equation}
Since there are $n$ vertices of $\mathcal{G}$, and $N$ repeated vertices in $\bar{\gamma}$ that could be used as $v_0$, the result follows.
\end{proof}

\begin{corollary}
\label{thm:max2enc}
For a graph $C_n^+(1,3)$, for any primitive pseudo orbit with length $0<l\leq n$, $N>0$ 2-enounters of length zero, and no other self-intersections,
\begin{equation}
    N \leq \frac{3l-2n}{4} \leq \frac{l}{4} \leq \frac{n}{4} \ .
\end{equation}
\end{corollary}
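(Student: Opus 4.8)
The plan is to read off the leftmost inequality directly from the combinatorial bookkeeping already established in the proof of Proposition~\ref{thm:numPsOsecondfamily2enc}, and then to dispatch the remaining two inequalities as immediate consequences of the length bound $l\leq n$.

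First I would fix a primitive pseudo orbit $\bar\gamma$ of the stated type and observe that, because it has $N>0$ $2$-encounters of length zero, it must fall into the walk-sum case $\alpha=2n$: in the case $\alpha=n$ Lemma~\ref{thm:circuitpassing} shows every vertex is passed only once (so no repetition, hence $N=0$), and in the case $\alpha=3n$ every arc is forced to be $3$, whence any repeated vertex $v$ forces the bonds $(v-3,v)$ and $(v,v+3)$ to repeat, creating an encounter of positive length rather than one of length zero. In the $\alpha=2n$ analysis of that proof, the bonds of $\bar\gamma$ were organized into $\tfrac{l}{2}-N$ pairs, of which exactly $n+N-l$ must take the vertex-skipping form $(v-1,v+2),(v,v+3)$. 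Since the skipping pairs form a subcollection of all the pairs, their counts satisfy
\begin{equation}
n+N-l \;\leq\; \frac{l}{2}-N,
\end{equation}
which rearranges to $4N\leq 3l-2n$, i.e.\ $N\leq\tfrac{3l-2n}{4}$. Equivalently, the existence of $\bar\gamma$ forces $PsO_N(n,l)\geq 1$, so the factor $\binom{l/2-N}{\,n-l+N\,}$ in \eqref{eq:numPsOsecondfamily2enc} is nonzero, and a binomial coefficient (under the paper's convention that out-of-range values vanish) is nonzero only when its lower index does not exceed its upper index, giving $n-l+N\leq \tfrac{l}{2}-N$ and hence the same bound.

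For the middle inequality, $\tfrac{3l-2n}{4}\leq\tfrac{l}{4}$ is equivalent to $3l-2n\leq l$, that is $l\leq n$, which holds by hypothesis; and the rightmost inequality $\tfrac{l}{4}\leq\tfrac{n}{4}$ is again just $l\leq n$. So both of these steps are purely algebraic and require no further structural input.

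The only real content is the first inequality, and the one thing to get right there is the identification of $N\leq\tfrac{3l-2n}{4}$ with the feasibility of the counting construction of Proposition~\ref{thm:numPsOsecondfamily2enc}: I must be sure that the skipping pairs, whose count is $n+N-l$, genuinely sit inside the collection of $\tfrac{l}{2}-N$ pairs, so that comparing cardinalities is legitimate. I expect no obstacle beyond this, since both quantities have already been tabulated in that proof.
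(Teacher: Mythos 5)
Your proposal is correct and takes essentially the same approach as the paper: the paper's proof simply observes that the existence of such a pseudo orbit forces $PsO_N(n,l)>0$ in \eqref{eq:numPsOsecondfamily2enc}, hence $\binom{\frac{l}{2}-N}{n-l+N}\neq 0$, hence $n-l+N\leq \frac{l}{2}-N$, which is exactly your inequality $4N\leq 3l-2n$, with the remaining two inequalities following from $l\leq n$ just as you say. Your primary structural phrasing (the $n+N-l$ vertex-skipping pairs form a subcollection of all $\frac{l}{2}-N$ pairs) is just the unwound form of the same bookkeeping from Proposition \ref{thm:numPsOsecondfamily2enc}, so the two arguments coincide.
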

\begin{proof}
Let $\mathcal{G}=C_n^+(1,3)$, with $n>3$.

By Proposition \ref{thm:numPsOsecondfamily2enc}, the number of pseudo orbits of length $0<l\leq n$ with $N>0$ 2-encounters of length zero on $\mathcal{G}$ is given by equation \eqref{eq:numPsOsecondfamily2enc}. 
If the combination $\binom{(l/2)-N}{n-l+N}=0$, then the whole equation is equal to zero, meaning there exists no such pseudo orbit. 
In order for the combination $\binom{(l/2)-N}{n-l+N}$ to be greater than zero, 
$l/2-N \geq n-l+N$, and so, $2N \leq l/2-n+l=(3l-2n)/2$. 
Thus, $N \leq (3l-2n)/4$. 
Since $l\leq n$, $(3l-2n)/4 \leq l/4 \leq n/4$.
\end{proof}

\section{Quantum Circulant Graphs}
\label{sec:quantum}

A quantum graph regards a graph as a network of wires, rather than abstract relations between vertices; a graph $\mathcal{G}$ becomes a \textit{metric graph} when we associate to each bond $b$ a positive length $L_b$, such that the bond can be identified with the interval $[0,L_b]$.
We will only consider finite bond lengths here.
A metric graph can be quantized in one of two ways; here we will use the approach of Tanner \cite{T00}.
See also \cite{BK13,GS06} for more details, as well as for examples of applications of quantum graphs.

A metric graph $\mathcal{G}$ becomes a \textit{quantum graph} when we assign to each vertex $v$ of the graph a unitary $D_v^{out}\times D_v^{in}$ \textit{vertex scattering matrix}; unitarity implies the incoming and outgoing degrees are equal.
A popular choice of vertex scattering matrix for 4-regular directed graphs with $D_v^{out} = D_v^{in} = 2$ \cite{HH20, HH21, T00}, which we will use here, is the $2\times 2$ Discrete Fourier Transform (DFT) matrix,
\begin{equation}
\label{eq:DFTmatrix}
    \sigma^{(v)} = \dfrac{1}{\sqrt{2}} 
    \begin{pmatrix}
    1 & 1 \\ 1 & -1
    \end{pmatrix} \ .
\end{equation}
Then we construct the $B\times B$ \textit{bond scattering matrix} $S$, which collects the elements of the vertex scattering matrices; if $b$ is incoming at $v$ and $b'$ is outgoing at $v$, then 
\begin{equation}
    S_{b',b} = \delta_{t(b),v}\ \delta_{o(b'),v}\
\sigma_{b',b}^{(v)} \ .
\end{equation}
Let the matrix $L$ be a diagonal matrix of the bond lengths of $\mathcal{G}$, and the \textit{spectrum of $\mathcal{G}$} is defined to be the values $k^2$ that satisfy the \textit{secular equation}
\begin{equation}
\label{eq:secularequation}
    \det(I-S\textrm{e}^{\textrm{i} kL})=0 \ ,
\end{equation}
where $I$ is the $B\times B$ identity matrix.

A related function is the graph's \textit{characteristic polynomial}, 
\begin{equation}
    F_{\zeta}(k) = 
    \det(\zeta I - S\textrm{e}^{\textrm{i} kL}) = 
    \sum_{l=0}^B a_l(k) \zeta^{B-l} \ .
\end{equation}
Note that $F_1(k)=0$ is the secular equation \eqref{eq:secularequation}.
The average value of the coefficients can be shown to be zero, except $\langle a_0 \rangle = \langle a_B \rangle = 1$ \cite{BHJ12}.
Thus we consider the variance of the polynomial's coefficients, which is shown in \cite{HH20,HH21} for 4-regular directed quantum graphs $\mathcal{G}$ with $\sigma^{(v)}$ the matrix \eqref{eq:DFTmatrix} at each vertex $v$ to be 
\begin{equation}
\label{eq:varianceeq}
    \langle |a_l|^2 \rangle = 
    \dfrac{1}{2^l}
    \left( |\mathcal{P}_0^l| + \sum_{N=1}^l
    2^N |\hat{\mathcal{P}}_N^l| \right) \ ,
\end{equation}
where $\mathcal{P}_0^l$ is the set of $\mathcal{G}$'s primitive pseudo orbits of length $l$ with no self-intersections, and $\hat{\mathcal{P}}_N^l$ is the set of $\mathcal{G}$'s primitive pseudo orbits of length $l$ with $N$ self-intersections, each of which is a 2-encounter of length zero.
Note that the length $l$ is a topological length, the number of bonds in the pseudo orbit as discussed in section \ref{sec:background}.  
Pseudo orbits on a quantum graph also have a metric length, the sum of the metric lengths of the bonds, but \eqref{eq:varianceeq} does not depend on this.

\begin{table}[!htb]
\caption{\label{tab:family1} For a circulant graph $C_n^+(1,2)$, the sizes of the sets of relevant primitive pseudo orbits and the resulting variance for the first half of the characteristic polynomial's non-zero coefficients for $l\geq 1$ are given. The last two columns give the numerical values of the variance and the error between our formula and the numerics.}
\centering
\ \\
\begin{tabular}{ c | c c c c c }
	\hline
	$n=5$  & 
	$l$ & $|\mathcal{P}_0^l|$ 
	& $\langle |a_l|^2 \rangle$ 
	& Numerics & Error \\
	\hline
	  & 3 & 5 & 5/8 & 0.624966041 & 0.000033959 \\
	  & 4 & 5 & 5/16 & 0.312357578 & 0.000142422 \\
	  & 5 & 2 & 1/16 & 0.062499321 & 0.000000679 \\
	\hline
	$n=6$  & 
	$l$ & $|\mathcal{P}_0^l|$ 
	& $\langle |a_l|^2 \rangle$ 
	& Numerics & Error \\
	\hline
	  & 3 & 2 & 1/4 & 0.249998003 & 0.000001997 \\
	  & 4 & 9 & 9/16 & 0.562438456 & 0.000061544 \\
	  & 5 & 6 & 3/16 & 0.187507642 & -0.000007642 \\
	  & 6 & 2 & 1/32 & 0.031249628 & 0.000000372 \\
	\hline
	$n=7$  & $l$ & $|\mathcal{P}_0^l|$ 
	& $\langle |a_l|^2 \rangle$ 
	& Numerics & Error \\
	\hline
	  & 4 & 7 & 7/16 & 0.437247358 & 0.000252642 \\
	  & 5 & 14 & 7/16 & 0.437196166 & 0.000303834 \\
	  & 6 & 7 & 7/64 & 0.109341058 & 0.000033942 \\
	  & 7 & 2 & 1/64 & 0.015624921 & 0.000000079 \\
	\hline
	$n=8$  & $l$ & $|\mathcal{P}_0^l|$ 
	& $\langle |a_l|^2 \rangle$ 
	& Numerics & Error \\
	\hline
	  & 4 & 2 & 1/8 & 0.125046705 & -0.000046705 \\
	  & 5 & 16 & 1/2 & 0.499830464 & 0.000169536 \\
	  & 6 & 20 & 5/16 & 0.312173883 & 0.000326117 \\
	  & 7 & 8 & 1/16 & 0.062501263 & -0.000001263 \\
	  & 8 & 2 & 1/128 & 0.007812419 & 0.000000081 \\
	\hline
	$n=9$ \hspace{0.075cm}  & $l$ & $|\mathcal{P}_0^l|$ 
	& $\langle |a_l|^2 \rangle$ 
	& Numerics & Error \\
	\hline
	& 5 & 9 & 9/32 & 0.281324004 & -0.000074004 \\
	& 6 & 30 & 15/32 & 0.469212961 & -0.000462961 \\
	& 7 & 27 & 27/128 & 0.211048843 & -0.000111343 \\
    & 8 & 9 & 9/256 & 0.035164771 & -0.000008521 \\
	& 9 & 2 & 1/256 & 0.003906231 & 0.000000019 \\
	\hline
	$n=10$  & $l$ & $|\mathcal{P}_0^l|$ 
	& $\langle |a_l|^2 \rangle$ 
	& Numerics & Error \\
	\hline
	  & 5 & 2 & 1/16 & 0.062502445 & -0.000002445 \\
	  & 6 & 25 & 25/64 & 0.390888426 & -0.000263426 \\
	  & 7 & 50 & 25/64 & 0.389006327 & 0.001618673 \\
	  & 8 & 35 & 35/256 & 0.136729477 & -0.000010727 \\
	  & 9 & 10 & 5/256 & 0.019531102 & 0.000000148 \\
	  & 10 & 2 & 1/512 & 0.001953158 & -0.000000033 \\
	\hline
\end{tabular}
\end{table}

\begin{table}[!htb]
\caption{\label{tab:family2} For a circulant graph $C_n^+(1,3)$, the sizes of the sets of relevant primitive pseudo orbits and the resulting variance for the first half of the characteristic polynomial's non-zero coefficients for $l\geq 1$ are given. The last two columns give the numerical values of the variance and the error between our formula and the numerics.}
\centering
\ \\
\begin{tabular}{ c | c c c c c c c }
	\hline
	$n=5$  & 
	$l$ & $|\mathcal{P}_0^l|$ 
	& $|\hat{\mathcal{P}}_1^l|$ 
	& $|\hat{\mathcal{P}}_2^l|$
	& $\langle |a_l|^2 \rangle$ 
	& Numerics & Error \\
	\hline
	  & 3 & 5 & 0 & 0 & 5/8 & 0.625009866 & -0.000009866 \\
	  & 4 & 5 & 0 & 0 & 5/16 & 0.312505801 & -0.000005801 \\
	  & 5 & 2 & 0 & 0 & 1/16 & 0.062499021 & 0.000000979 \\
	\hline
	$n=6$  & 
	$l$ & $|\mathcal{P}_0^l|$ 
	& $|\hat{\mathcal{P}}_1^l|$ 
	& $|\hat{\mathcal{P}}_2^l|$
	& $\langle |a_l|^2 \rangle$ 
	& Numerics & Error \\
	\hline
	  & 2 & 3 & 0 & 0 & 3/4 & 0.750001357 & -0.000001357 \\
	  & 4 & 9 & 0 & 0 & 9/16 & 0.562538720 & -0.000038720 \\
	  & 6 & 4 & 24 & 0 & 13/16 & 0.812525458 & -0.000025458 \\
	\hline
	$n=7$  & 
	$l$ & $|\mathcal{P}_0^l|$ 
	& $|\hat{\mathcal{P}}_1^l|$ 
	& $|\hat{\mathcal{P}}_2^l|$
	& $\langle |a_l|^2 \rangle$ 
	& Numerics & Error \\
	\hline
	  & 3 & 7 & 0 & 0 & 7/8 & 0.874687835 & 0.000312165 \\
	  & 5 & 7 & 0 & 0 & 7/32 & 0.218729857 & 0.000020143 \\
	  & 6 & 7 & 14 & 0 & 35/64 & 0.546614990 & 0.000260010 \\
	  & 7 & 2 & 0 & 0 & 1/64 & 0.015622444 & 0.000002556 \\
	\hline
	$n=8$  & 
	$l$ & $|\mathcal{P}_0^l|$ 
	& $|\hat{\mathcal{P}}_1^l|$ 
	& $|\hat{\mathcal{P}}_2^l|$
	& $\langle |a_l|^2 \rangle$ 
	& Numerics & Error \\
	\hline
	  & 4 & 12 & 0 & 0 & 3/4 & 0.750132369 & -0.000132369 \\
	  & 6 & 16 & 0 & 0 & 1/4 & 0.249954214 & 0.000045786 \\
	  & 8 & 4 & 48 & 16 & 41/64 & 0.640908891 & -0.000283891 \\
	\hline
	$n=9$  & 
	$l$ & $|\mathcal{P}_0^l|$ 
	& $|\hat{\mathcal{P}}_1^l|$ 
	& $|\hat{\mathcal{P}}_2^l|$
	& $\langle |a_l|^2 \rangle$ 
	& Numerics & Error \\
	\hline
	& 3 & 3 & 0 & 0 & 3/8 & 0.375049482 & -0.000494825 \\
	& 5 & 18 & 0 & 0 & 9/16 & 0.562589299 & -0.000089299 \\
	& 6 & 3 & 0 & 0 & 3/64 & 0.046881185 & -0.000006185 \\
	& 7 & 9 & 0 & 0 & 9/128 & 0.070318141 & -0.000005641 \\
    & 8 & 9 & 54 & 0 & 117/256 & 0.457146967 & -0.000115717 \\
	& 9 & 2 & 0 & 0 & 1/256 & 0.003906314 & -0.000000064 \\
	\hline
	$n=10$  & 
	$l$ & $|\mathcal{P}_0^l|$ 
	& $|\hat{\mathcal{P}}_1^l|$ 
	& $|\hat{\mathcal{P}}_2^l|$
	& $\langle |a_l|^2 \rangle$ 
	& Numerics & Error \\
	\hline
	  & 4 & 10 & 0 & 0 & 5/8 & 0.625081315 & -0.000081315 \\
	  & 6 & 25 & 0 & 0 & 25/64 & 0.389251207 & 0.001373793 \\
	  & 8 & 25 & 20 & 0 & 65/256 & 0.253723049 & 0.000183201 \\
      & 10 & 4 & 80 & 120 & 161/256 & 0.628475168 & 0.000431082 \\
	\hline
\end{tabular}
\end{table}

Using the formulas determined in section \ref{sec:PsO} for the numbers of primitive pseudo orbits of these types in two different circulant graph families, we compute $\langle |a_l|^2 \rangle$ for $l=1,\dots,n$. 
See Tables \ref{tab:family1} and \ref{tab:family2} for these computations, as well as numerical results and corresponding error terms.
The numerical values of $\langle |a_l|^2 \rangle$ for $F_{\zeta}(k)$ were computed in MATLAB for circulant graphs having random bond lengths in the interval $[0.9,1.1]$, with the variance averaged over 1.5 million values of $k$.
The coefficient $a_0$ is always equal to one for a characteristic polynomial; the formula yields the same, as the null pseudo orbit $\bar{0}$ of length zero certainly lacks self-intersection.
A Riemann-Siegel lookalike formula for the coefficients, $a_l = a_B \bar{a}_{B-l}$, yields that the variance of the coefficients is symmetric about $l=n$, so only the nonzero variances of the first half of the coefficients are shown in Tables \ref{tab:family1} and \ref{tab:family2}, excluding $\langle |a_0|^2 \rangle = 1$.

\section{Conclusions}
\label{sec:conclusions}

By looking at the properties of graphs $C_n^+(a_1,a_2)$ for $a_1\neq a_2 \in \mathbb{N}$, we developed a closed formula, Theorem \ref{thm:generalPO}, that counts the number of primitive periodic orbits of any positive length on these graphs. 
This allows one to count primitive periodic orbits on a graph without referring to the graph's adjacency matrix and without a recursive formula.

We then counted the pseudo orbits belonging to relevant classes (no self-intersection or $N>0$ 2-encounters of length zero) for two specific families of graphs, $C_n^+(1,2)$ and $C_n^+(1,3)$, and used the counting results and \eqref{eq:varianceeq} to compute the variance of the coefficients of the characteristic polynomial of the corresponding quantum graphs. 
The numerical results that do not rely on counting pseudo orbits agree with our combinatorial results.

A next step may be to generalize the counting results for certain classes of pseudo orbits, section \ref{sec:PsO}, to other families of similar circulant graphs. 
One may also extend the counting methods developed in this paper to the study of other directed circulant graphs or other $k$-regular graphs with $D_v^{in}=D_v^{out}$, although certain methods, including the use of walk sums, may be most applicable in the context of circulant graphs. 

 

{\footnotesize
}

{\footnotesize  
\medskip
\medskip
\vspace*{1mm}

\noindent {\it Lauren Engelthaler}\\  
University of Dallas\\
1845 E Northgate Dr \\
Irving, TX 75062 \\
E-mail: {\tt lengelthaler@udallas.edu}\\ \\  

\noindent {\it Isaac Hellerman}\\  
University of Dallas \\
1845 E Northgate Dr \\
Irving, TX 75062 \\
E-mail: {\tt ihellerman@udallas.edu}\\ \\

\noindent {\it Tori Hudgins}\\  
University of Dallas \\
1845 E Northgate Dr \\
Irving, TX 75062 \\
E-mail: {\tt vhudgins@udallas.edu} \\ \\   

}

\newpage
\section*{Appendix A: Numbers of Primitive Periodic Orbits for \texorpdfstring{$C_n^+(a_1,a_2)$}{}}

The primitive periodic orbits were generated by strings of adjacent vertices (without repeating strings that are equivalent cyclically) and then the total was counted for each graph in MATLAB. 
For all of the data shown, it was confirmed  in Python that the formula in Theorem \ref{thm:generalPO} agreed for the appropriate values of $a_1$ and $a_2$. 
The primitive periodic orbits of length one are not displayed for any graph, as there are no loops on any of the graphs we consider.

\subsection*{$C_n^+(1,2)$ (1st family)}

\begin{center}
\begin{tabular}{ c c | c c c c c c c c c c c c c c }
& & \multicolumn{14}{c}{$l$} \\

& & \textbf{2} & \textbf{3} & \textbf{4} & \textbf{5} & \textbf{6} & \textbf{7} & \textbf{8} & \textbf{9} & \textbf{10} & \textbf{11} & \textbf{12} & \textbf{13} & \textbf{14} & \textbf{15} \\
\hline
\multirow{8}{1em}{$n$} 
& \textbf{3}  & 3 & 2 & 3 & 6 & 9 & 18 & 30 & 56 & 99 & 186 & 335 & 630 & 1161 & 2182\\ 
& \textbf{4}  & 2 & 4 & 1 & 8 & 8 & 16 & 35 & 52 & 98 & 192 & 325 & 640 & 1162 & 2164\\ 
& \textbf{5}  & 0 & 5 & 5 & 2 & 10 & 25 & 20 & 50 & 126 & 175 & 290 & 695 & 1195 & 2001\\ 
& \textbf{6}  & 0 & 2 & 9 & 6 & 1 & 18 & 48 & 56 & 51 & 186 & 459 & 630 & 849 & 2182\\ 
& \textbf{7}  & 0 & 0 & 7 & 14 & 7 & 2 & 21 & 98 & 140 & 112 & 161 & 700 & 1716 & 2380\\ 
& \textbf{8}  & 0 & 0 & 2 & 16 & 20 & 8 & 1 & 32 & 160 & 336 & 320 & 224 & 620 & 2672\\ 
& \textbf{9}  & 0 & 0 & 0 & 9 & 30 & 27 & 9 & 2 & 36 & 270 & 678 & 891 & 639 & 543\\ 
& \textbf{10}  & 0 & 0 & 0 & 2 & 25 & 50 & 35 & 10 & 1 & 50 & 400 & 1320 & 2120 & 2002
\end{tabular}
\end{center}

\subsection*{$C_n^+(1,3)$ (2nd family)}

\begin{center}
\begin{tabular}{ c c | c c c c c c c c c c c c c c } 
& & \multicolumn{14}{c}{$l$} \\

& & \textbf{2} & \textbf{3} & \textbf{4} & \textbf{5} & \textbf{6} & \textbf{7} & \textbf{8} & \textbf{9} & \textbf{10} & \textbf{11} & \textbf{12} & \textbf{13} & \textbf{14} & \textbf{15} \\
\hline
\multirow{8}{1em}{$n$} 
& \textbf{4}  & 4 & 0 & 6 & 0 & 20 & 0 & 60 & 0 & 204 & 0 & 670 & 0 & 2340 & 0\\ 
& \textbf{5}  & 0 & 5 & 5 & 2 & 10 & 25 & 20 & 50 & 126 & 175 & 290 & 695 & 1195 & 2001\\ 
& \textbf{6}  & 3 & 0 & 6 & 0 & 21 & 0 & 60 & 0 & 204 & 0 & 670 & 0 & 2340 & 0\\ 
& \textbf{7}  & 0 & 7 & 0 & 7 & 14 & 2 & 49 & 63 & 35 & 294 & 287 & 427 & 1716 & 1610\\ 
& \textbf{8}  & 0 & 0 & 12 & 0 & 16 & 0 & 66 & 0 & 192 & 0 & 692 & 0 & 2304 & 0\\ 
& \textbf{9}  & 0 & 3 & 0 & 18 & 0 & 9 & 63 & 1 & 180 & 135 & 165 & 1188 & 288 & 2997\\ 
& \textbf{10}  & 0 & 0 & 10 & 0 & 25 & 0 & 40 & 0 & 254 & 0 & 580 & 0 & 2415 & 0\\
& \textbf{11}  & 0 & 0 & 0 & 22 & 0 & 33 & 11 & 11 & 220 & 2 & 726 & 242 & 770 & 3663
\end{tabular}
\end{center}

\subsection*{$C_n^+(1,4)$}

\begin{center}
\begin{tabular}{ c c | c c c c c c c c c c c c c c } 
& & \multicolumn{14}{c}{$l$} \\

& & \textbf{2} & \textbf{3} & \textbf{4} & \textbf{5} & \textbf{6} & \textbf{7} & \textbf{8} & \textbf{9} & \textbf{10} & \textbf{11} & \textbf{12} & \textbf{13} & \textbf{14} & \textbf{15} \\
\hline
\multirow{8}{1em}{$n$} 
& \textbf{5}  & 5 & 0 & 5 & 2 & 15 & 10 & 40 & 40 & 125 & 150 & 385 & 550 & 1285 & 2002\\ 
& \textbf{6}  & 0 & 8 & 0 & 0 & 28 & 0 & 0 & 168 & 0 & 0 & 1008 & 0 & 0 & 6552\\ 
& \textbf{7}  & 0 & 0 & 7 & 14 & 7 & 2 & 21 & 98 & 140 & 112 & 161 & 700 & 1716 & 2380\\ 
& \textbf{8}  & 4 & 0 & 0 & 8 & 0 & 40 & 1 & 112 & 32 & 240 & 330 & 448 & 1696 & 968\\ 
& \textbf{9}  & 0 & 9 & 0 & 0 & 27 & 0 & 0 & 167 & 0 & 0 & 1008 & 0 & 0 & 6552\\ 
& \textbf{10}  & 0 & 0 & 15 & 2 & 0 & 10 & 80 & 40 & 1 & 150 & 765 & 550 & 125 & 2002\\
& \textbf{11}  & 0 & 0 & 0 & 22 & 0 & 33 & 11 & 11 & 220 & 2 & 726 & 242 & 770 & 3663\\
& \textbf{12}  & 0 & 4 & 0 & 0 & 30 & 0 & 0 & 180 & 0 & 0 & 976 & 0 & 0 & 6604
\end{tabular}
\end{center}

\subsection*{$C_n^+(1,5)$}

\begin{center}
\begin{tabular}{ c c | c c c c c c c c c c c c c c } 
& & \multicolumn{14}{c}{$l$} \\

& & \textbf{2} & \textbf{3} & \textbf{4} & \textbf{5} & \textbf{6} & \textbf{7} & \textbf{8} & \textbf{9} & \textbf{10} & \textbf{11} & \textbf{12} & \textbf{13} & \textbf{14} & \textbf{15} \\
\hline
\multirow{8}{1em}{$n$} 
& \textbf{6}  & 6 & 0 & 6 & 0 & 20 & 0 & 60 & 0 & 204 & 0 & 670 & 0 & 2340 & 0\\ 
& \textbf{7}  & 0 & 7 & 0 & 7 & 14 & 2 & 49 & 63 & 35 & 294 & 287 & 427 & 1716 & 1610\\ 
& \textbf{8}  & 0 & 0 & 16 & 0 & 0 & 0 & 120 & 0 & 0 & 0 & 1360 & 0 & 0 & 0\\ 
& \textbf{9}  & 0 & 0 & 0 & 9 & 30 & 27 & 9 & 2 & 36 & 270 & 678 & 891 & 639 & 543\\ 
& \textbf{10}  & 5 & 0 & 0 & 0 & 10 & 0 & 70 & 0 & 253 & 0 & 710 & 0 & 2145 & 0\\
& \textbf{11}  & 0 & 11 & 0 & 0 & 22 & 11 & 0 & 99 & 132 & 2 & 440 & 1089 & 77 & 2200\\
& \textbf{12}  & 0 & 0 & 18 & 0 & 0 & 0 & 120 & 0 & 0 & 0 & 1360 & 0 & 0 & 0\\
& \textbf{13}  & 0 & 0 & 0 & 26 & 13 & 0 & 0 & 13 & 260 & 390 & 65 & 2 & 338 & 4329
\end{tabular}
\end{center}

\subsection*{$C_n^+(1,6)$}

\begin{center}
\begin{tabular}{ c c | c c c c c c c c c c c c c c } 
& & \multicolumn{14}{c}{$l$} \\

& & \textbf{2} & \textbf{3} & \textbf{4} & \textbf{5} & \textbf{6} & \textbf{7} & \textbf{8} & \textbf{9} & \textbf{10} & \textbf{11} & \textbf{12} & \textbf{13} & \textbf{14} & \textbf{15} \\
\hline
\multirow{8}{1em}{$n$} 
& \textbf{7}  & 7 & 0 & 7 & 0 & 21 & 2 & 56 & 14 & 175 & 70 & 525 & 308 & 1715 & 1274\\ 
& \textbf{8}  & 0 & 8 & 2 & 0 & 16 & 24 & 1 & 72 & 168 & 48 & 320 & 1056 & 612 & 1656\\ 
& \textbf{9}  & 0 & 3 & 9 & 0 & 0 & 45 & 27 & 1 & 108 & 378 & 162 & 207 & 1908 & 3003\\ 
& \textbf{10}  & 0 & 0 & 0 & 32 & 0 & 0 & 0 & 0 & 496 & 0 & 0 & 0 & 0 & 10912\\
& \textbf{11}  & 0 & 0 & 0 & 0 & 11 & 55 & 77 & 44 & 11 & 2 & 55 & 605 & 2332 & 4719\\
& \textbf{12}  & 6 & 0 & 0 & 0 & 0 & 12 & 0 & 112 & 0 & 504 & 1 & 1584 & 72 & 4004\\
& \textbf{13}  & 0 & 13 & 0 & 0 & 26 & 0 & 13 & 117 & 0 & 195 & 520 & 2 & 1859 & 2600\\
& \textbf{14}  & 0 & 0 & 21 & 0 & 0 & 2 & 112 & 14 & 0 & 70 & 1071 & 308 & 1 & 1274
\end{tabular}
\end{center}

\subsection*{$C_n^+(2,3)$}

\begin{center}
\begin{tabular}{ c c | c c c c c c c c c c c c c c } 
& & \multicolumn{14}{c}{$l$} \\

& & \textbf{2} & \textbf{3} & \textbf{4} & \textbf{5} & \textbf{6} & \textbf{7} & \textbf{8} & \textbf{9} & \textbf{10} & \textbf{11} & \textbf{12} & \textbf{13} & \textbf{14} & \textbf{15} \\
\hline
\multirow{8}{1em}{$n$} 
& \textbf{4}  & 2 & 4 & 1 & 8 & 8 & 16 & 35 & 52 & 98 & 192 & 325 & 640 & 1162 & 2164\\ 
& \textbf{5}  & 5 & 0 & 5 & 2 & 15 & 10 & 40 & 40 & 125 & 150 & 385 & 550 & 1285 & 2002\\ 
& \textbf{6}  & 3 & 2 & 0 & 12 & 0 & 30 & 21 & 56 & 120 & 120 & 462 & 462 & 1311 & 2180\\ 
& \textbf{7}  & 0 & 7 & 0 & 7 & 14 & 2 & 49 & 63 & 35 & 294 & 287 & 427 & 1716 & 1610\\
& \textbf{8}  & 0 & 8 & 2 & 0 & 16 & 24 & 1 & 72 & 168 & 48 & 320 & 1056 & 612 & 1656\\
& \textbf{9}  & 0 & 3 & 9 & 0 & 0 & 45 & 27 & 1 & 108 & 378 & 162 & 207 & 1908 & 3003\\
& \textbf{10}  & 0 & 0 & 15 & 2 & 0 & 10 & 80 & 40 & 1 & 150 & 765 & 550 & 125 & 2002\\
& \textbf{11}  & 0 & 0 & 11 & 11 & 0 & 0 & 33 & 154 & 44 & 2 & 198 & 1452 & 1573 & 341
\end{tabular}
\end{center}

\subsection*{$C_n^+(2,4)$}

\begin{center}
\begin{tabular}{ c c | c c c c c c c c c c c c c c } 
& & \multicolumn{14}{c}{$l$} \\

& & \textbf{2} & \textbf{3} & \textbf{4} & \textbf{5} & \textbf{6} & \textbf{7} & \textbf{8} & \textbf{9} & \textbf{10} & \textbf{11} & \textbf{12} & \textbf{13} & \textbf{14} & \textbf{15} \\
\hline
\multirow{8}{1em}{$n$} 
& \textbf{5}  & 0 & 5 & 5 & 2 & 10 & 25 & 20 & 50 & 126 & 175 & 290 & 695 & 1195 & 2001\\ 
& \textbf{6*}  & 6 & 4 & 6 & 12 & 18 & 36 & 60 & 112 & 198 & 372 & 670 & 1260 & 2322 & 4364\\ 
& \textbf{7}  & 0 & 0 & 7 & 14 & 7 & 2 & 21 & 98 & 140 & 112 & 161 & 700 & 1716 & 2380\\ 
& \textbf{8*}  & 4 & 8 & 2 & 16 & 16 & 32 & 70 & 104 & 196 & 384 & 650 & 1280 & 2324 & 4328\\
& \textbf{9}  & 0 & 0 & 0 & 9 & 30 & 27 & 9 & 2 & 36 & 270 & 678 & 891 & 639 & 543\\
& \textbf{10*}  & 0 & 10 & 10 & 4 & 20 & 50 & 40 & 100 & 252 & 350 & 580 & 1390 & 2390 & 4002\\
& \textbf{11}  & 0 & 0 & 0 & 0 & 11 & 55 & 77 & 44 & 11 & 2 & 55 & 605 & 2332 & 4719\\
& \textbf{12*}  & 0 & 4 & 18 & 12 & 2 & 36 & 96 & 112 & 102 & 372 & 918 & 1260 & 1698 & 4364
\end{tabular}
\end{center}
*not a connected graph

\subsection*{$C_n^+(2,5)$}

\begin{center}
\begin{tabular}{ c c | c c c c c c c c c c c c c c } 
& & \multicolumn{14}{c}{$l$} \\

& & \textbf{2} & \textbf{3} & \textbf{4} & \textbf{5} & \textbf{6} & \textbf{7} & \textbf{8} & \textbf{9} & \textbf{10} & \textbf{11} & \textbf{12} & \textbf{13} & \textbf{14} & \textbf{15} \\
\hline
\multirow{8}{1em}{$n$} 
& \textbf{6}  & 0 & 8 & 0 & 0 & 28 & 0 & 0 & 168 & 0 & 0 & 1008 & 0 & 0 & 6552\\ 
& \textbf{7}  & 7 & 0 & 7 & 0 & 21 & 2 & 56 & 14 & 175 & 70 & 525 & 308 & 1715 & 1274\\ 
& \textbf{8}  & 0 & 0 & 2 & 16 & 20 & 8 & 1 & 32 & 160 & 336 & 320 & 224 & 620 & 2672\\ 
& \textbf{9}  & 0 & 9 & 0 & 0 & 27 & 0 & 0 & 167 & 0 & 0 & 1008 & 0 & 0 & 6552\\
& \textbf{10}  & 5 & 0 & 0 & 2 & 0 & 30 & 0 & 140 & 0 & 420 & 55 & 990 & 700 & 2002\\
& \textbf{11}  & 0 & 0 & 11 & 11 & 0 & 0 & 33 & 154 & 44 & 2 & 198 & 1452 & 1573 & 341\\
& \textbf{12}  & 0 & 12 & 0 & 0 & 26 & 0 & 0 & 156 & 0 & 0 & 976 & 0 & 0 & 6500\\
& \textbf{13}  & 0 & 0 & 0 & 13 & 0 & 65 & 0 & 52 & 52 & 13 & 858 & 2 & 2756 & 390
\end{tabular}
\end{center}

\subsection*{$C_n^+(3,4)$}

\begin{center}
\begin{tabular}{ c c | c c c c c c c c c c c c c c } 
& & \multicolumn{14}{c}{$l$} \\

& & \textbf{2} & \textbf{3} & \textbf{4} & \textbf{5} & \textbf{6} & \textbf{7} & \textbf{8} & \textbf{9} & \textbf{10} & \textbf{11} & \textbf{12} & \textbf{13} & \textbf{14} & \textbf{15} \\
\hline
\multirow{8}{1em}{$n$} 
& \textbf{5}  & 0 & 5 & 5 & 2 & 10 & 25 & 20 & 50 & 126 & 175 & 290 & 695 & 1195 & 2001\\ 
& \textbf{6}  & 3 & 2 & 0 & 12 & 0 & 30 & 21 & 56 & 120 & 120 & 462 & 462 & 1311 & 2180\\ 
& \textbf{7}  & 7 & 0 & 7 & 0 & 21 & 2 & 56 & 14 & 175 & 70 & 525 & 308 & 1715 & 1274\\ 
& \textbf{8}  & 4 & 0 & 0 & 8 & 0 & 40 & 1 & 112 & 32 & 240 & 330 & 448 & 1696 & 968\\
& \textbf{9}  & 0 & 3 & 0 & 18 & 0 & 9 & 63 & 1 & 180 & 135 & 165 & 1188 & 288 & 2997\\
& \textbf{10}  & 0 & 10 & 0 & 2 & 20 & 0 & 35 & 90 & 1 & 300 & 400 & 70 & 2145 & 2000\\
& \textbf{11}  & 0 & 11 & 0 & 0 & 22 & 11 & 0 & 99 & 132 & 2 & 440 & 1089 & 77 & 2200\\
& \textbf{12}  & 0 & 4 & 3 & 0 & 0 & 60 & 0 & 0 & 252 & 180 & 0 & 660 & 2544 & 364
\end{tabular}
\end{center}

\subsection*{$C_n^+(3,5)$}

\begin{center}
\begin{tabular}{ c c | c c c c c c c c c c c c c c } 
& & \multicolumn{14}{c}{$l$} \\

& & \textbf{2} & \textbf{3} & \textbf{4} & \textbf{5} & \textbf{6} & \textbf{7} & \textbf{8} & \textbf{9} & \textbf{10} & \textbf{11} & \textbf{12} & \textbf{13} & \textbf{14} & \textbf{15} \\
\hline
\multirow{8}{1em}{$n$} 
& \textbf{6}  & 3 & 0 & 6 & 0 & 21 & 0 & 60 & 0 & 204 & 0 & 670 & 0 & 2340 & 0\\ 
& \textbf{7}  & 0 & 0 & 7 & 14 & 7 & 2 & 21 & 98 & 140 & 112 & 161 & 700 & 1716 & 2380\\ 
& \textbf{8}  & 8 & 0 & 8 & 0 & 24 & 0 & 66 & 0 & 216 & 0 & 688 & 0 & 2376 & 0\\ 
& \textbf{9}  & 0 & 3 & 9 & 0 & 0 & 45 & 27 & 1 & 108 & 378 & 162 & 207 & 1908 & 3003\\
& \textbf{10}  & 5 & 0 & 0 & 0 & 10 & 0 & 70 & 0 & 253 & 0 & 710 & 0 & 2145 & 0\\
& \textbf{11}  & 0 & 11 & 0 & 0 & 22 & 11 & 0 & 99 & 132 & 2 & 440 & 1089 & 77 & 2200\\
& \textbf{12}  & 0 & 0 & 3 & 0 & 40 & 0 & 42 & 0 & 156 & 0 & 905 & 0 & 2028 & 0\\
& \textbf{13}  & 0 & 13 & 0 & 0 & 26 & 0 & 13 & 117 & 0 & 195 & 520 & 2 & 1859 & 2600
\end{tabular}
\end{center}

\newpage
\section*{Appendix B: Primitive Pseudo Orbits for  \texorpdfstring{$C_n^+(1,3)$}{}}

These lists include all of the primitive pseudo orbits of length $0<l\leq n$ for the second family of graphs, $C_n^+(1,3)$ for $5\leq n\leq 10$, sorted by self-intersections (no self-intersections, $N>0$ 2-encounters of length zero, and self-intersections of length greater than zero). 
The periodic orbits were generated using code written in MATLAB that constructed all possible circuits as a sequence of vertices on a graph (without repeating strings that are equivalent cyclically); each sequence wraps around so that the terminal vertex is the origin vertex. 
Combining these into pseudo orbits and sorting was primarily done by hand. 
Parentheses denote a new primitive pseudo orbit, while commas separate primitive periodic orbits contained in a pseudo orbit.

\setlength{\parindent}{0pt}
\setlength{\parskip}{0.5em}

\subsection*{Primitive pseudo orbits for $n=5$, $l\leq 5$, sorted by self-intersections:}

$l=3$

\textit{No self-intersection} (5 orbits):
(012), 
(014), 
(034), 
(123), 
(234)

$l=4$

\textit{No self-intersection} (5 orbits):
(0142), 
(0312), 
(0314), 
(0342), 
(1423)

$l=5$

\textit{No self-intersection} (2 orbits):
(01234), 
(03142)

\subsection*{Primitive pseudo orbits for $n=6$, $l\leq 6$, sorted by self-intersections:}

$l=2$

\textit{No self-intersection} (3 orbits):
(03),
(14),
(25)

$l=4$

\textit{No self-intersection} (9 orbits):
(0123),
(0125),
(0145),
(0345),
(1234),
(2345),
(03, 14),
(03, 25),\\
(14, 25)

$l=6$

\textit{No self-intersection} (4 orbits):
(012345),
(014523),
(034125),
(03, 14, 25)

\textit{One 2-encounter of length 0} (24 orbits): 
(012345),
(012503),
(012523),
(014123),
(014125),
(014503),
(014523),
(014525),
(034123),
(034125),
(034145),
(034523),
(034525),
(125234),
(145234),
(03, 0125),
(03, 0145),
(03, 1234),
(03, 2345),
(14, 0123),
(14, 0125),
(14, 0345),
(14, 2345),
(25, 0123),
(25, 0145),
(25, 0345),
(25, 1234)

\pagebreak
\textit{At least one 2-encounter of length 1 or greater} (12 orbits):
(012303),
(012525),
(014145),
(030345),
(123414),
(234525),
(03, 0123),
(03, 0345),
(14, 0145),
(14, 1234),
(25, 0125),
(25, 2345)

\subsection*{Primitive pseudo orbits for $n=7$, $l\leq 7$, sorted by self-intersections:}

$l=3$

\textit{No self-intersection} (7 orbits):
(014),
(034),
(036),
(125),
(145),
(236),
(256)

$l=5$

\textit{No self-intersection} (7 orbits):
(01234),
(01236),
(01256),
(01456),
(03456),
(12345),
(23456)

$l=6$

\textit{No self-intersection} (7 orbits):
(014, 236),
(014, 256),
(034, 125),
(034, 256),
(036, 125),
(036, 145),\\
(145, 236)

\textit{One 2-encounter of length 0} (14 orbits):
(012514),
(014036),
(034514),
(036234),
(036256),
(123625),
(145625),
(014, 036),
(014, 125),
(034, 145),
(034, 236),
(036, 256),
(125, 236),
(145, 256)

\textit{At least one 2-encounter of length 1 or greater} (14 orbits):
(014034),
(014514),
(034036),
(036236),
(125145),
(125625),
(236256),
(014, 034),
(014, 145),
(034, 036),
(036, 236),
(125, 145),
(125, 256),\\
(236, 256)

$l=7$

\textit{No self-intersection} (2 orbits):
(0123456),
(0362514),

\subsection*{Primitive pseudo orbits for $n=8$, $l\leq8$, sorted by self-intersections:}

$l=4$

\textit{No self-intersection} (12 orbits):
(0125),
(0145),
(0147),
(0345),
(0347),
(0367),
(1236),
(1256),
(1456),
(2347),
(2367),
(2567)

$l=6$

\textit{No self-intersection} (16 orbits):
(012345),
(012347),
(012367),
(012567),
(014567),
(014725),
(034567),
(034725),
(036125),
(036145),
(036147),
(036725),
(123456),
(147236),
(147256),
(234567)

\pagebreak

$l=8$

\textit{No self-intersection} (4 orbits):
(01234567),
(03614725),
(0145, 2367),
(0347, 1256)

\textit{One 2-encounter of length 0} (48 orbits):
(01234725),
(01236145),
(01236147),
(01236725),
(01250347),
(01250367),
(01256147),
(01450367),
(01456725),
(01472345),
(01472367),
(01472567),
(03456125),
(03456147),
(03456725),
(03472567),
(03612345),
(03612347),
(03612567),
(03614567),
(03672345),
(12347256),
(14567236),
(14723456),
(0125, 0347),
(0125, 0367),
(0125, 2347),
(0125, 2367),
(0145, 0367),
(0145, 1236),
(0145, 2347),\\
(0145, 2567),
(0147, 1236),
(0147, 1256),
(0147, 2367),
(0147, 2567),
(0345, 1236),\\
(0345, 1256),
(0345, 2367),
(0345, 2567),
(0347, 1236),
(0347, 1456),
(0347, 2567),\\
(0367, 1256),
(0367, 1456),
(1256, 2347),
(1456, 2347),
(1456, 2367)

\textit{Two 2-encounters of length 0} (16 orbits):
(01256145),
(01450347),
(01456125),
(01470345),
(03472367),
(03672347),
(12367256),
(12567236),
(0125, 1456),
(0145, 0347),
(0145, 1256),
(0147, 0345),
(0347, 2367),
(0367, 2347),
(1236, 2567),
(1256, 2367)

\textit{At least one 2-encounter of length 1 or greater} (64 orbits):
(01236125),
(01250145),
(01250147),
(01250345),
(01256125),
(01256725),
(01450147),
(01450345),
(01456145),
(01456147),
(01470347),
(01470367),
(01472347),
(03450347),
(03450367),
(03456145),
(03470367),
(03472345),
(03472347),
(03612367),
(03672367),
(03672567),
(12347236),
(12361256),
(12361456),
(12367236),
(12561456),
(12567256),
(14567256),
(23472367),
(23472567),
(23672567),
(0125, 0145),
(0125, 0147),
(0125, 0345),
(0125, 1236),\\
(0125, 1256),
(0125, 2567),
(0145, 0147),
(0145, 0345),
(0145, 1456),
(0147, 0347),\\
(0147, 0367),
(0147, 1456),
(0147, 2347),
(0345, 0347),
(0345, 0367),
(0345, 1456),\\
(0345, 2347),
(0347, 0367),
(0347, 2347),
(0367, 1236),
(0367, 2367),
(0367, 2567),\\
(1236, 1256),
(1236, 1456),
(1236, 2347),
(1236, 2367),
(1256, 1456),
(1256, 2567),\\
(1456, 2567),
(2347, 2367),
(2347, 2567),
(2367, 2567)

\subsection*{Primitive pseudo orbits for $n=9$, $l\leq 9$, sorted by self-intersections:}

$l=3$

\textit{No self-intersection} (3 orbits):
(036), (147), (258)

$l=5$

\textit{No self-intersection} (18 orbits):
(01236), (01256), (01258), (01456), (01458), (01478), (03456), (03458), (03478), (03678), (12347), (12367), (12567), (14567), (23458), (23478), (23678), (25678)

$l=6$

\textit{No self-intersection} (3 orbits):
(036, 147), (036, 258), (147, 258)

\pagebreak
$l=7$

\textit{No self-intersection} (9 orbits):
(0123456), (0123458), (0123478), (0123678), (0125678), (0145678), 
(0345678), (1234567), (2345678)

$l=8$

\textit{No self-intersection} (9 orbits):
(01458236),
(01478236),
(01478256),
(03471256),
(03471258),
(03478256),
(03671258),
(03671458),
(14582367)

\textit{One 2-encounter of length 0} (54 orbits):
(01258036),
(01258236),
(01458036),
(01458256),
(01471236),
(01471256),
(01471258),
(01478036),
(01478258),
(03458236),
(03458256),
(03471236),
(03471456),
(03471458),
(03478236),
(03478258),
(03671256),
(03671456),
(03671478),
(03678256),
(03678258),
(12582347),
(12582367),
(14582347),
(14582567),
(14782367),
(14782567),
(036, 01258),
(036, 01458),
(036, 01478),
(036, 12347),\\
(036, 12567),
(036, 14567),
(036, 23458),
(036, 23478),
(036, 25678),
(147, 01236),\\
(147, 01256),
(147, 01258),
(147, 03456),
(147, 03458),
(147, 03678),
(147, 23458),\\
(147, 23678),
(147, 25678),
(258, 01236),
(258, 01456),
(258, 01478),
(258, 03456),\\
(258, 03478),
(258, 03678),
(258, 12347),
(258, 12367),
(258, 14567)

\textit{At least one 2-encounter of length 1 or greater} (54 orbits):
(01236036),
(01256036),
(01258256),
(01258258),
(01456036),
(01458258),
(01471456),
(01471458),
(01471478),
(03456036),
(03458036),
(03458258),
(03471478),
(03478036),
(03671236),
(03678036),
(03678236),
(12347147),
(12367147),
(12567147),
(12582567),
(14567147),
(14782347),
(23458258),
(23478258),
(23678258),
(25678258),
(036, 01236),
(036, 01256),
(036, 01456),
(036, 03456),
(036, 03458),
(036, 03478),
(036, 03678),
(036, 12367),
(036, 23678),\\
(147, 01456),
(147, 01458),
(147, 01478),
(147, 03478),
(147, 12347),
(147, 12367),\\
(147, 12567),
(147, 14567),
(147, 23478),
(258, 01256),
(258, 01258),
(258, 01458),\\
(258, 03458),
(258, 12567),
(258, 23458),
(258, 23478),
(258, 23678),
(258, 25678)

$l=9$

\textit{No self-intersection} (2 orbits):
(012345678),
(036, 147, 258)

\subsection*{Primitive pseudo orbits for $n=10$, $l\leq 10$, sorted by self-intersections:}

$l=4$

\textit{No self-intersection} (10 orbits):
(0147),
(0347),
(0367),
(0369),
(1258),
(1458),
(1478),
(2369),
(2569),
(2589)

$l=6$

\textit{No self-intersection} (25 orbits):
(012347),
(012367),
(012369),
(012567),
(012569),
(012589),
(014567),
(014569),
(014589),
(014789),
(034567),
(034569),
(034589),
(034789),
(036789),
(123458),
(123478),
(123678),
(125678),
(145678),
(234569),
(234589),
(234789),
(236789),
(256789)

\newpage
$l=8$

\textit{No self-intersection} (25 orbits):
(01234567),
(01234569),
(01234589),
(01234789),\\
(01236789),
(01256789),
(01456789),
(03456789),
(12345678),
(23456789),
(0147, 2369),
(0147, 2569),
(0147, 2589),
(0347, 1258),
(0347, 2569),
(0347, 2589),
(0367, 1258),\\
(0367, 1458),
(0367, 2589),
(0369, 1258),
(0369, 1458),
(0369, 1478),
(1458, 2369),\\
(1478, 2369),
(1478, 2569)

\textit{One 2-encounter of length 0} (20 orbits):
(01258147),
(03458147),
(03678147),
(03690147),
(03692347),
(03692567),
(03692589),
(12369258),
(14569258),
(14789258),
(0147, 0369),
(0147, 1258),
(0347, 1458),
(0347, 2369),
(0367, 1478),
(0367, 2569),
(0369, 2589),\\
(1258, 2369),
(1458, 2569),
(1478, 2589)

\textit{At least one 2-encounter of length 1 or greater} (40 orbits):

(01458147),
(01470347),
(01470367),
(01478147),
(03470367),
(03470369),
(03478147),
(03670369),
(03692367),
(03692369),
(03692569),
(12569258),
(12581458),
(12581478),
(12589258),
(14581478),
(14589258),
(23692569),
(23692589),
(25692589),
(0147, 0347),
(0147, 0367),
(0147, 1458),
(0147, 1478),
(0347, 0367),
(0347, 0369),
(0347, 1478),\\
(0367, 0369),
(0367, 2369),
(0369, 2369),
(0369, 2569),
(1258, 1458),
(1258, 1478),\\
(1258, 2569),
(1258, 2589),
(1458, 1478),
(1458, 2589),
(2369, 2569),
(2369, 2589),\\
(2569, 2589)

$l=10$

\textit{No self-intersection} (4 orbits):
(0123456789), (0145892367), (0347812569), (0369258147)

\textit{One 2-encounter of length 0} (80 orbits):
(0125690347),
(0125692347),
(0125890347),
(0125890367),
(0125892347),
(0125892367),
(0145692347),
(0145692367),
(0145812367),
(0145812369),
(0145890367),
(0145892347),
(0145892369),
(0145892567),
(0147812369),
(0147812569),
(0147892367),
(0147892369),
(0147892567),
(0147892569),
(0345812367),
(0345812369),
(0345812567),
(0345812569),
(0345892367),
(0345892567),
(0347812369),
(0347812567),
(0347812589),
(0347814569),
(0347892567),
(0347892569),
(0367812569),
(0367812589),
(0367814569),
(0367814589),
(1256923478),
(1456923478),
(1456923678),
(1458923678),
(0147, 234569),
(0147, 234589),
(0147, 236789),
(0147, 256789),\\
(0347, 012569),
(0347, 012589),
(0347, 125678),
(0347, 256789),
(0367, 012589),\\
(0367, 014589),
(0367, 123458),
(0367, 234589),
(0369, 123458),
(0369, 123478),\\
(0369, 125678),
(0369, 145678),
(1258, 034567),
(1258, 034569),
(1258, 034789),\\
(1258, 036789),
(1458, 012367),
(1458, 012369),
(1458, 036789),
(1458, 236789),\\
(1478, 012369),
(1478, 012569),
(1478, 034569),
(1478, 234569),
(2369, 014567),\\
(2369, 014589),
(2369, 014789),
(2369, 145678),
(2569, 012347),
(2569, 014789),\\
(2569, 034789),
(2569, 123478),
(2589, 012347),
(2589, 012367),
(2589, 014567),\\
(2589, 034567)

\textit{Two 2-encounters of length 0} (120 orbits):
(0123458147),
(0123470369),
(0123678147),
(0123690347),
(0123692567),
(0123692589),
(0125670369),
(0125678147),
(0125690367),
(0125692367),
(0125814567),
(0125814569),
(0125814789),
(0125892369),
(0145670369),
(0145690347),
(0145690367),
(0145692589),
(0145812347),
(0145812567),
(0145812569),
(0145890347),
(0145892569),
(0147034569),
(0147034589),
(0147036789),
(0147812367),
(0147812567),
(0147812589),
(0147890367),
(0345678147),
(0345692367),
(0345692589),
(0345814789),
(0345892369),
(0345892569),
(0347812367),
(0347814567),
(0347814589),
(0347892367),
(0347892369),
(0367812347),
(0367892347),
(0367892569),
(0369234567),
(0369234589),
(0369234789),
(0369256789),
(1234569258),
(1234789258),
(1236789258),
(1236925678),
(1256923458),
(1256923678),
(1258923478),
(1258923678),
(1456789258),
(1458923478),
(1458925678),
(1478923458),
(0147, 034569),
(0147, 034589),
(0147, 036789),
(0147, 123458),
(0147, 123678),
(0147, 125678),
(0347, 012369),
(0347, 014569),\\
(0347, 014589),
(0347, 123678),
(0347, 145678),
(0347, 236789),
(0367, 012569),\\
(0367, 014569),
(0367, 014789),
(0367, 123478),
(0367, 234569),
(0367, 234789),\\
(0369, 012347),
(0369, 012567),
(0369, 014567),
(0369, 234589),
(0369, 234789),\\
(0369, 256789),
(1258, 014567),
(1258, 014569),
(1258, 014789),
(1258, 234569),\\
(1258, 234789),
(1258, 236789),
(1458, 012347),
(1458, 012567),
(1458, 012569),\\
(1458, 034789),
(1458, 234789),
(1458, 256789),
(1478, 012367),
(1478, 012567),\\
(1478, 012589),
(1478, 034567),
(1478, 034589),
(1478, 234589),
(2369, 012567),\\
(2369, 012589),
(2369, 034567),
(2369, 034589),
(2369, 034789),
(2369, 125678),\\
(2569, 012367),
(2569, 014589),
(2569, 034589),
(2569, 036789),
(2569, 123458),\\
(2569, 123678),
(2589, 012369),
(2589, 014569),
(2589, 034569),
(2589, 123478),\\
(2589, 123678),
(2589, 145678)

\textit{At least one 2-encounter of length 1 or greater} (300 orbits):
(0123470147),
(0123470347),
(0123470367),
(0123478147),
(0123670147),
(0123670347),
(0123670367),
(0123670369),
(0123690147),
(0123690367),
(0123690369),
(0123692347),
(0123692367),
(0123692369),
(0123692569),
(0125670147),
(0125670347),
(0125670367),
(0125690147),
(0125690369),
(0125692369),
(0125692567),
(0125692569),
(0125692589),
(0125812347),
(0125812367),
(0125812369),
(0125812567),
(0125812569),
(0125812589),
(0125814589),
(0125890147),
(0125890369),
(0125892567),
(0125892569),
(0125892589),
(0145670147),
(0145670347),
(0145670367),
(0145678147),
(0145690147),
(0145690369),
(0145692369),
(0145692567),
(0145692569),
(0145812589),
(0145814567),
(0145814569),
(0145814589),
(0145814789),
(0145890147),
(0145890369),
(0145892589),
(0147014789),
(0147034567),
(0147034789),
(0147812347),
(0147814567),
(0147814569),
(0147814589),
(0147814789),
(0147890347),
(0147890369),
(0147892347),
(0147892589),
(0345670347),
(0345670367),
(0345670369),
(0345690347),
(0345690367),
(0345690369),
(0345692347),
(0345692369),
(0345692567),
(0345692569),
(0345812347),
(0345812589),
(0345814567),
(0345814569),
(0345814589),
(0345890347),
(0345890367),
(0345890369),
(0345892347),
(0345892589),
(0347034789),
(0347036789),
(0347812347),
(0347814789),
(0347890367),
(0347890369),
(0347892347),
(0347892589),
(0367036789),
(0367812367),
(0367812369),
(0367812567),
(0367814567),
(0367814789),
(0367890369),
(0367892367),
(0367892369),
(0367892567),
(0367892589),
(0369234569),
(0369236789),
(1234581258),
(1234581458),
(1234581478),
(1234589258),
(1234781258),
(1234781458),
(1234781478),
(1236781258),
(1236781458),
(1236781478),
(1236923458),
(1236923478),
(1236923678),
(1256781258),
(1256781458),
(1256781478),
(1256789258),
(1256925678),
(1258145678),
(1258923458),
(1258925678),
(1456781458),
(1456781478),
(1456923458),
(1456925678),
(1458923458),
(1478923478),
(1478923678),
(1478925678),
(2345692369),
(2345692569),
(2345692589),
(2345892369),
(2345892569),
(2345892589),
(2347892369),
(2347892569),
(2347892589),
(2367892369),
(2367892569),
(2367892589),
(2369256789),
(2567892569),
(2567892589),
(0147, 012347),
(0147, 012367),
(0147, 012369),
(0147, 012567),
(0147, 012569),
(0147, 012589),
(0147, 014567),\\
(0147, 014569),
(0147, 014589),
(0147, 014789),
(0147, 034567),
(0147, 034789),\\
(0147, 123478),
(0147, 145678),
(0147, 234789),
(0347, 012347),
(0347, 012367),\\
(0347, 012567),
(0347, 014567),
(0347, 014789),
(0347, 034567),
(0347, 034569),\\
(0347, 034589),
(0347, 034789),
(0347, 036789),
(0347, 123458),
(0347, 123478),\\
(0347, 234569),
(0347, 234589),
(0347, 234789),
(0367, 012347),
(0367, 012367),\\
(0367, 012369),
(0367, 012567),
(0367, 014567),
(0367, 034567),
(0367, 034569),\\
(0367, 034589),
(0367, 034789),
(0367, 036789),
(0367, 123678),
(0367, 125678),\\
(0367, 145678),
(0367, 236789),
(0367, 256789),
(0369, 012367),
(0369, 012369),\\
(0369, 012569),
(0369, 012589),
(0369, 014569),
(0369, 014589),
(0369, 014789),\\
(0369, 034567),
(0369, 034569),
(0369, 034589),
(0369, 034789),
(0369, 036789),\\
(0369, 123678),
(0369, 234569),
(0369, 236789),
(1258, 012347),
(1258, 012367),\\
(1258, 012369),
(1258, 012567),
(1258, 012569),
(1258, 012589),
(1258, 014589),\\
(1258, 034589),
(1258, 123458),
(1258, 123478),
(1258, 123678),
(1258, 125678),\\
(1258, 145678),
(1258, 234589),
(1258, 256789),
(1458, 012589),
(1458, 014567),\\
(1458, 014569),
(1458, 014589),
(1458, 014789),
(1458, 034567),
(1458, 034569),\\
(1458, 034589),
(1458, 123458),
(1458, 123478),
(1458, 123678),
(1458, 125678),\\
(1458, 145678),
(1458, 234569),
(1458, 234589),
(1478, 012347),
(1478, 014567),\\
(1478, 014569),
(1478, 014589),
(1478, 014789),
(1478, 034789),
(1478, 036789),\\
(1478, 123458),
(1478, 123478),
(1478, 123678),
(1478, 125678),
(1478, 145678),\\
(1478, 234789),
(1478, 236789),
(1478, 256789),
(2369, 012347),
(2369, 012367),\\
(2369, 012369),
(2369, 012569),
(2369, 014569),
(2369, 034569),
(2369, 036789),\\
(2369, 123458),
(2369, 123478),
(2369, 123678),
(2369, 234569),
(2369, 234589),\\
(2369, 234789),
(2369, 236789),
(2369, 256789),
(2569, 012369),
(2569, 012567),\\
(2569, 012569),
(2569, 012589),
(2569, 014567),
(2569, 014569),
(2569, 034567),\\
(2569, 034569),
(2569, 125678),
(2569, 145678),
(2569, 234569),
(2569, 234589),\\
(2569, 234789),
(2569, 236789),
(2569, 256789),
(2589, 012567),
(2589, 012569),\\
(2589, 012589),
(2589, 014589),
(2589, 014789),
(2589, 034589),
(2589, 034789),\\
(2589, 036789),
(2589, 123458),
(2589, 125678),
(2589, 234569),
(2589, 234589),\\
(2589, 234789),
(2589, 236789),
(2589, 256789)


\begin{thebibliography}{00}

\bibitem{Aetal00} E. Akkermans, A. Comtet, J. Desbois, G. Montambuax, C. Texier, Spectral Determinant on Quantum Graphs, {\it Ann. Physics}, {\bf 284} (2000), 10--51.

\bibitem{AYI10} T. Atajan, X. Yong, H. Inaba, An efficient approach for counting the number of spanning trees in circulant and related graphs, {\it Discrete Math.}, {\bf 310} (2010), 1210--1221.

\bibitem{BHJ12} R. Band, J.M. Harrison, C.H. Joyner, Finite pseudo orbit expansions for spectral quantities of quantum graphs, {\it J. Phys. A}, {\bf 45} (2012), 325204.

\bibitem{BHS19} R. Band, J.M. Harrison, M. Sepanski, Lyndon word decompositions and pseudo orbits on q-nary graphs, {\it J. Math. Anal. Appl.}, {\bf 470} (2019), 135--144.

\bibitem{BK13} G. Berkolaiko, P. Kuchment, {\it Introduction to Quantum Graphs}, American Mathematical Society, 2013.




\bibitem{ET69} B. Elspas, J. Turner, Graphs with Circulant Adjacency Matrices, {\it J. Combin. Theory}, {\bf 9} (1970), 297--307.

\bibitem{GS06} S. Gnutzmann, U. Smilansky, Quantum graphs: applications to quantum chaos and universal spectral statistics, {\it Advances in Physics}, {\bf 55} (2006), 527--625.


\bibitem{G92} M. Gutzwiller, Quantum chaos, {\it Scientific American}, {\bf 266} (1992), 78--85.

\bibitem{HH20} J. Harrison, T. Hudgins, Complete Dynamical Evaluation of the Characteristic
Polynomial of Binary Quantum Graphs, available online at the URL: \url{https://arxiv.org/pdf/2011.05213.pdf}

\bibitem{HH21} J. Harrison, T. Hudgins, Periodic orbit evaluation of a spectral statistic of quantum graphs without the semiclassical limit, available online at the URL: \url{https://arxiv.org/pdf/2101.00006.pdf}

\bibitem{HS19} J.M. Harrison, E. Swindle, Spectral properties of quantum circulant graphs, {\it J. Phys. A}, {\bf 52} (2019), 285101.

\bibitem{KS99} T. Kottos, U. Smilansky, Periodic Orbit Theory and Spectral Statistics for Quantum Graphs, {\it Ann. Physics}, {\bf 274} (1999), 76--124.

\bibitem{LP98} R. Lidl, G. Pilz, {\it Applied Abstract Algebra}, Springer-Verlag, 1998.

\bibitem{MS13} J. Marklof, A. Str\"ombergsson, Diameters of random circulant graphs, {\it Combinatorica}, {\bf 33} (2013), 429--466.




\bibitem{T00} G. Tanner, Spectral statistics for unitary transfer matrices of binary graphs, {\it J. Phys. A}, {\bf 33} (2000), 3567--3585.

\end{thebibliography}
\end{document}